\newtheorem{theorem}{Theorem}
\newtheorem{lemma}[theorem]{Lemma}
\newtheorem{notation}[theorem]{Notation}
\newtheorem{corollary}[theorem]{Corollary}
\newtheorem{mythr}{Theorem}
\newtheorem{thr}{Theorem}
\newtheorem{example}[theorem]{Exemple}
\newtheorem{remark}[theorem]{Remark}
\newcommand{\C}{\ensuremath{\mathbb C}}
\newcommand{\Q}{\ensuremath{\mathbb Q}}
\newcommand{\N}{\ensuremath{\mathbb N}}
\newcommand{\Ima}{\mathrm{Im}}
\newcommand{\Ker}{\mathrm{Ker}}
\newcommand{\OO}{\mathcal{O}}
\newcommand{\DD}{\mathcal{D}}
\newcommand{\FF}{\mathcal{F}}
\newcommand{\MM}{\mathcal{M}}
\newcommand{\QQ}{\mathcal{Q}}
\newcommand{\tF}{\tilde{\mathcal{F}}}
\newcommand{\dx}{\partial_x}
\newcommand{\dy}{\partial_y}
\newcommand{\ddx}{\frac{\partial}{\partial x}}
\newcommand{\ddy}{\frac{\partial}{\partial y}}
\newcommand{\dxal}{\partial_x\alpha}
\newcommand{\dxbe}{\partial_x\beta}
\newcommand{\dyal}{\partial_y\alpha}
\newcommand{\dybe}{\partial_y\beta}
\newcommand{\ta}{\tilde{a}}
\newcommand{\tb}{\tilde{b}}
\newcommand{\bA}{\bar{A}}
\newcommand{\bp}{\bar{p}}
\newcommand{\bB}{\bar{B}}
\newcommand{\bT}{\bar{T}}
\newcommand{\bU}{\bar{U}}
\newcommand{\bV}{\bar{V}}
\newcommand{\tq}{\tilde{q}}
\newcommand{\epz}{\varepsilon_0}
\newcommand{\epi}{\varepsilon_\infty}
\newcommand{\bx}{\bar{x}}
\newcommand{\by}{\bar{y}}
\begin{document}

\title[Normal forms of topologically quasi-homogeneous foliation]{Normal forms of topologically quasi-homogeneous foliations on $(\C^2,0)$}
\author{{\sc Truong} Hong Minh }
\address{Institut de Mathématiques de Toulouse, UMR5219, Université Toulouse 3}
\email{hong-minh.truong@math.univ-toulouse.fr}

\begin{abstract}{
The aim of this paper is to construct formal normal forms for the class of topologically quasi-homogeneous foliations under generic conditions. Any such normal form is given as the sum of three terms: an initial generic quasi-homogeneous term, a hamiltonian term and a radial term. Moreover, we also show that the number of free coefficients in the hamiltonian part is consistent with the dimension of Mattei's moduli space of unfoldings. 
}\end{abstract}

\date{Version 1, June 2013}
\subjclass{}
\keywords{formal normal form, quasi-homogeneous foliation}%

\maketitle



\section*{Introduction and Preliminaries}


A germ of holomorphic function $f:(\C^2,0)\rightarrow(\C,0)$ is \emph{quasi-homogeneous} if $f$ belongs to the jacobian ideal 
$J(f)=(\frac{\partial f}{\partial x},\frac{\partial f}{\partial y})$. If $f$ is quasi-homogeneous, there exist coordinates $(x,y)$ and positive coprime integers $k,\ell$ such that
$R(f)=d\cdot f$, where $R=kx\ddx + \ell y\ddy$ is the quasi-radial vector field and $d$ is the quasi-homogeneous degree of $f$ \cite{Sai}. In these coordinates, $f$ can be written, up to a multiple of a constant, as
$$f=x^{n_0}y^{n_\infty}\prod_{i=1}^n(y^k-f_ix^\ell)^{n_i},$$
where the multiplicities satisfy $n_0\geq 0, n_\infty\geq 0, n_i>0$ and the coefficients $c_i$ are non vanishing such that $f_i\neq f_j$. A germ of holomorphic function $f$ is called \emph{topologically quasi-homogeneous} if its zero level set is topologically conjugated to the zero level set of a quasi-homogeneous function. We also say that a germ of non-dicritical holomorphic foliations 
$\FF$ is \emph{topologically quasi-homogeneous} if after desingularization by successive blowing-ups, none of singularities of strict transform $\tF$ are saddle-node and the separatrices of $\FF$ is the zero level set of a topologically quasi-homogeneous function. The separatrices of $\FF$ that are conjugated to these curves $\{y^k-c_i x^\ell =0\}$ are called the \emph{cuspidal branches}. The one (if exists) that is conjugated to $\{x^{n_0}=0\}$ or $\{y^{n_\infty} =0\}$ is called the \emph{$y$-axis branch} or \emph{$x$-axis branch} respectively. We call $\FF$ \emph{topologically quasi-homogeneous with axis branches} if it admits both $x$-axis and $y$-axis branches. 

Two germs of $1$-forms $\omega$ and $\omega'$ are called \emph{orbitally equivalent} if there exists a local change of coordinates $\phi$ and an invertible function $u$ such that 
$$u.\phi^*\omega=\omega'.$$
If moreover $D\phi(0)=\mathrm{Id}$ and $u(0)=1$ then the equivalence will be called \emph{strict}.
\begin{thr}
Let $\omega$ be a $1$-form which defines a topologically quasi-homogeneous foliation with axis branches. Under a generic condition, $\omega$ is strictly formally orbitally equivalent to a unique form $\omega_{h,s}$
\begin{equation*}
\omega_{h,s}=\omega_d+d\left(xy h\right)+s(kydx-\ell xdy)
\end{equation*}
where 
\begin{equation*}
\omega_d=c_0xy\prod_{i=1}^n(y^k-c_ix^\ell)\left(\sum_{i=1}^n\lambda_i\frac{d(y^k-c_ix^\ell)}{y^k-c_ix^\ell}+(\ell\lambda_0+\ell-u)\frac{dx}{x}+(k\lambda_\infty+v)\frac{dy}{y}\right),
\end{equation*}
\begin{equation*}
h(x,y)=\sum_{\substack{ki+\ell j\geq k\ell n+1\\ 0\leq i\leq \ell n-1\\ 0\leq j\leq kn-1}} h_{ij}x^iy^j,\; s(x,y)=\sum_{j=0}^{kn-1}s_j(x)x^{\ell n+1+]\frac{1-\ell j}{k}]}y^j,
\end{equation*} 
$s_i(x)$ are formal series on $x$, $\big]\frac{1-\ell j}{k}\big]$ stands for strict integer part of $\frac{1-\ell j}{k}$ defined as $\big]\frac{1-\ell j}{k}\big]<\frac{1-\ell j}{k}\leq \big]\frac{1-\ell j}{k}\big]+1$.
\end{thr}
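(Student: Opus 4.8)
The plan is to exploit the quasi-homogeneous grading attached to the weights $(k,\ell)$ and to reduce the statement to a sequence of linear (cohomological) equations, one in each graded degree. I assign to $x$ the weight $k$ and to $y$ the weight $\ell$, so that a monomial $x^iy^j$ has quasi-degree $ki+\ell j$ and the quasi-radial field $R=kx\ddx+\ell y\ddy$ acts by $L_R(x^iy^j)=(ki+\ell j)x^iy^j$; accordingly a $1$-form $A\,dx+B\,dy$ has quasi-degree $\delta$ when $A,B$ are quasi-homogeneous of degrees $\delta-k$ and $\delta-\ell$. First I would expand $\omega=\sum_{\delta\geq d}\omega_\delta$ into quasi-homogeneous components, where $d=k+\ell+k\ell n$ is the quasi-degree of the principal part. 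The hypothesis that $\FF$ is topologically quasi-homogeneous with axis branches, together with the generic condition, should guarantee that after a first (formal, tangent to the identity) change of coordinates the principal part $\omega_d$ is exactly the logarithmic model displayed in the statement; this is the standard normal form for a generic quasi-homogeneous foliation and fixes the residues $\lambda_i,\lambda_0,\lambda_\infty$ and the constants $c_i$.

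The core is an induction on the quasi-degree. Suppose $\omega$ has already been normalized up to degree $d+m-1$, i.e.\ it agrees with $\omega_{h,s}$ through that degree. Applying the time-one flow of a quasi-homogeneous vector field $X$ of degree $m$ (so $[R,X]=mX$, which makes the transformation tangent to the identity) together with multiplication by a unit $1+g$, $g$ quasi-homogeneous of degree $m$, modifies the degree-$(d+m)$ part by
\begin{equation*}
\omega_{d+m}\longmapsto \omega_{d+m}+\mathcal{L}(X,g),\qquad \mathcal{L}(X,g):=L_X\omega_d+g\,\omega_d,
\end{equation*}
while leaving the lower-degree part untouched, since $X$ and $g$ have positive degree and $\omega_d$ is the bottom term. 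Thus the whole problem reduces to showing that, in each degree $d+m$, the displayed Hamiltonian and radial monomials form a complement to the image of the homological operator $\mathcal{L}$ inside the space $\Omega^1_{d+m}$ of quasi-homogeneous $1$-forms of degree $d+m$. Existence of the normal form then follows by solving these equations degree by degree, the resulting coordinate change and unit being a well-defined formal object because each is produced one graded piece at a time.

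To compute the image of $\mathcal{L}$ I would write $\omega_d=f\,\Theta$ with $f=c_0xy\prod_{i=1}^n(y^k-c_ix^\ell)$ and $\Theta$ the associated closed logarithmic form, so that $d\omega_d=df\wedge\Theta$ and Cartan's formula gives $L_X\omega_d=(X\cdot f)\,\Theta+f\,d(i_X\Theta)$. Hence
\begin{equation*}
\mathcal{L}(X,g)=\big(X\cdot f+g\,f\big)\,\Theta+f\,d(i_X\Theta),
\end{equation*}
and because $f$ is quasi-homogeneous one has $f\in J(f)$, so the $\Theta$-coefficient $X\cdot f+g\,f$ ranges exactly over the degree-$(d+m)$ part of the Jacobian ideal $J(f)=(\partial_xf,\partial_yf)$, the remaining exact freedom being carried by $f\,d(i_X\Theta)$. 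The whole analysis therefore funnels into the graded structure of the Milnor algebra $\C[x,y]/J(f)$ and of the exact/radial splitting of $\Omega^1_{d+m}$: the finitely many monomials surviving in the relevant quotient should match precisely the support prescribed for $h$, producing the Hamiltonian term $d(xyh)$, while the part of $\Omega^1_{d+m}$ reached by neither $(X\cdot f+g f)\Theta$ nor the exact contributions should be exhausted by multiples of $ky\,dx-\ell x\,dy$, producing the radial term $s(ky\,dx-\ell x\,dy)$. The generic condition enters here to force the resonance-type operators (essentially multiplication by the residues of $\Theta$) to be invertible in all the relevant degrees, so that no extra cokernel appears and the complement is canonical.

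The main obstacle I anticipate is precisely this cokernel computation: identifying the exact monomial support $\{ki+\ell j\geq k\ell n+1,\ 0\leq i\leq\ell n-1,\ 0\leq j\leq kn-1\}$ of $h$ with a basis of the relevant quotient, and pinning down why the leftover directions assemble into the single radial family with the shifted exponents $x^{\ell n+1+]\frac{1-\ell j}{k}]}$. This requires a careful bookkeeping along the boundary of the Newton polygon, where the Jacobian relations degenerate and the division $\Omega=p\,df+f\,(\cdots)$ fails to be unique; controlling these boundary degrees, and checking that the generic condition rules out the sporadic resonances that would enlarge the cokernel, is the delicate point. Finally, uniqueness of $\omega_{h,s}$ is immediate once the complement is shown to be direct: two normal forms in the same strict orbital class would differ by an element of $\mathrm{Im}\,\mathcal{L}$ lying in the complement, hence would coincide.
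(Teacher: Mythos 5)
Your overall framework---grading by quasi-degree, induction degree by degree, and reduction of the problem to showing that the prescribed $(h,s)$-monomials form a complement to the image of the homological operator $\mathcal{L}(X,g)=L_X\omega_d+g\,\omega_d$---is exactly the skeleton of the paper's proof: there the infinitesimal data appear as $\phi=\mathrm{id}+(\alpha,\beta)$ and $u=1+\delta$, and the operator is computed in Lemma \ref{lem2.7.1} through $\Delta q_{d+m}=AU+BV$ and the formula for $\Delta b_{d+m-\ell}$. But the step you defer as ``the main obstacle'' is not a residual technicality: it is the entire content of Lemmas \ref{lem2.7.2}, \ref{lem1.2.5} and \ref{lem1.2.6}, and your proposed route to it through the Milnor algebra of $f$ mispredicts both the operator and the generic condition. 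Contracting with the quasi-radial field shows the relevant coefficients are $A=ma_{d-k}+\partial_xq_d$ and $B=mb_{d-\ell}+\partial_yq_d$, which depend on the degree $m$ and on the residues $\lambda_i$, not only on $f$; so the cokernel you must compute is not the graded piece of $\C[x,y]/J(f)$. Concretely, the paper proves: (a) $\gcd(A,B)=1$ when the $\lambda_i$ are irrational (Lemma \ref{lem2.7.2}); (b) for $m\geq k\ell n$ the map $\Psi_m(\bU,\bV)=\bA\bU+\bB\bV$ is surjective, by identifying $\Ker\Psi_m=\{(Z\bB,-Z\bA)\}$ and counting dimensions via $e_m$ (Lemma \ref{lem6}); and (c) for $1\leq m\leq k\ell n-1$ the projected map $pr_m\circ\Psi_m$ is bijective, which holds only under nonvanishing of the resultant-type determinant $\det M_m$ of \eqref{2.2.7}---an explicit open condition on the coefficients of $\omega_d$ that is \emph{not} implied by irrationality of the residues. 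Your expectation that genericity amounts to invertibility of ``multiplication by the residues'' therefore understates what must be verified, and nothing in your sketch produces the support bounds $\deg_x\leq\ell n-1$, $\deg_y\leq kn-1$ for $h$.

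A second gap is that your image computation cannot be decoupled as you hope: $X$ appears in both terms $(X\cdot f+gf)\,\Theta$ and $f\,d(i_X\Theta)$, so choosing $X$ to normalize the $\Theta$-coefficient simultaneously constrains the exact term. The paper resolves this coupling with two ingredients your proposal lacks: the unique $R$-decomposition $\omega=\omega_d+dh+s(\ell y\,dx-kx\,dy)$ of Corollary \ref{cor1.1.9}, which cleanly separates hamiltonian and radial parts, and the observation that after killing $q_{d+m}$ the residual freedom consists exactly of the kernel elements $(\bU,\bV)=(Z\bB,-Z\bA)$ together with $\delta$, whose effect on $b_{d+m-\ell}$ collapses to the single operator $\Phi_m(T)=m\frac{b_{d-\ell}}{x}T-\frac{q_d}{x}\partial_yT$ with $T=\frac{\delta}{d+m}-ZC$. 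Proving $\Phi_m$ injective and transverse to $\{\deg_y\leq kn-1\}$ via the factorization argument around \eqref{2.2.8}---again using $\lambda_i\notin\Q$---is precisely what yields the radial family with the shifted exponents $x^{\ell n+1+]\frac{1-\ell j}{k}]}$, and this computation is entirely absent from your proposal. (A minor point: no preliminary normalization of the initial part is needed; Lemma \ref{lem2.5.1} shows every $\omega_d\in\QQ^d(f)$ is automatically of the stated logarithmic shape, by Lagrange interpolation from the Camacho--Sad indices.) Your uniqueness argument is correct in outline once the complement property is established, but as written the proposal sets up the right linear-algebra problem and stops before solving it.
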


\section{Desingularization process of quasi-homogeneous functions}
Fix a reduced quasi-homogeneous function $f=x^{\epz}y^{\epi}\prod_{i=1}^n(y^k-f_i x^\ell)$. Let us recall the algorithm of desingularization of $f$ and its atlas of charts.

On the blowing-up of $(\C^2,0)$ endowed with the chart $(x,y)$, we will use the standard charts $(x,\by),(\bx,y)$ together with the transition functions $\bx=\by^{-1}, y=x\by$. The center
of the first chart $(x,\by)$ is denoted by $0$ and the center of the second one is denoted by $\infty$ (figure \ref{fig2.1}). We denote by 
\begin{equation*}
\sigma=\sigma_1\circ\ldots\circ \sigma_p:(\MM,\DD)\rightarrow (\C^2,0) 
\end{equation*}
the desingularization map of $f$
obtained by composition of the blowing-up's $\sigma_i$, $1\leq i\leq p$, and $\DD=\sigma^{-1}(0)$ the exceptional divisor.
\begin{figure}
\begin{center}
\includegraphics[scale=.8]{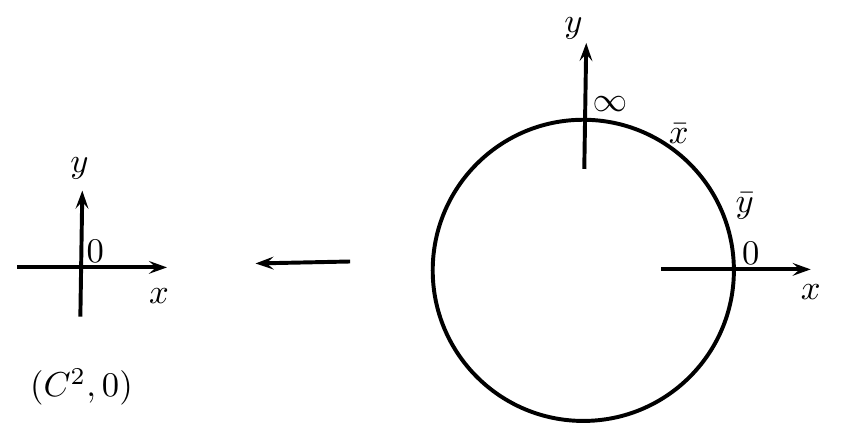}
\caption{Blowing-up at origin of $(\C^2,0)$}\label{fig2.1}
\end{center}
\end{figure}
Let us sketch some properties of $\sigma$. In the desingularization process, we only have to use blowing-up of $0$ or $\infty$. Therefore, the tree of exceptional divisor is a totally ordered sequence of $N$ 
components covered by $N+1$ charts and the map $\sigma$ is monomial in each chart. Before the last blowing-up, all cuspidal branches share the same infinitesimal point. 
After the last blowing-up, they appear on the same component of $\DD$ called the principal component. If $\epz\neq 0$ or $\epi\neq 0$, 
the corresponding strict branches appear on the end components. Let us number the components of $\DD$ and their charts in such a way that $D_1$ corresponds to the strict 
branches which appears if $\epi\neq 0$. So, we obtain $N+1$ chart $(x_i,y_i), i=0,\ldots,N$, such that each component $D_i,i=1\ldots N$ is covered by domains 
$V_{i-1}$ and $V_i$ of the charts $(x_{i-1},y_{i-1})$ around $(D_i,0)$ and $(D_i,\infty)$ (figure \ref{fig2.2}). The change of charts is given by
$$x_{i}=y_{i-1}^{-1}, y_{i}=x_{i-1}y_{i-1}^{e_i}$$
where $-e_i$ is the self intersection number of the component $D_i$. We denote by $c$ the index corresponding to the principal component. Then, the desingularization map $\sigma$ is given
in the chart $(x_c,y_c)$ by $(x,y)=(x_c^{k-v}y_c^k,x_c^{\ell-u}y_c^\ell)$ \cite{Gen-Pau2}, where $u$, $v$ two positive integers such that 
$ku-\ell v=1$ and $u\leq\ell, v\leq k$.
\begin{figure}
\begin{center}
\includegraphics[scale=.8]{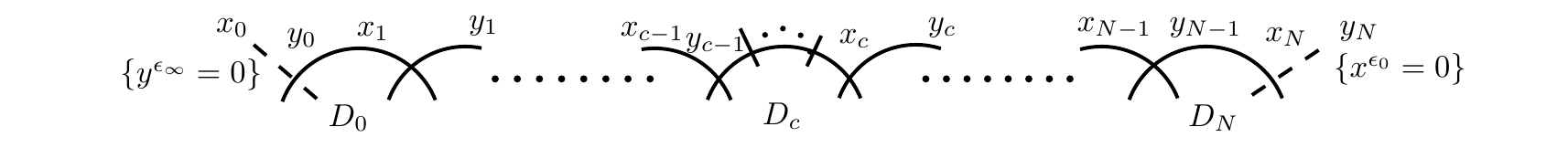}
\caption{Desingularization of $f$}\label{fig2.2}
\end{center}
\end{figure}

\section{Desingularization of topologically quasi-homogeneous functions}
If two germs of holomorphic functions are topologically conjugated, they admit the same dual tree of desingularization.
In particular, their desingularization maps have the same number of blowing-ups but they are not necessarily equal. The following lemma shows that in the case topologically 
quasi-homogeneous they can share the same desingularization map after a local change of coordinates.  
\begin{lemma}\label{lem2.3.1}
If a reduced function $f$ is topologically conjugated to $f$ then there exists a local change of coordinates $\phi$ such that $f\circ\phi$ has the same desingularization 
map as $f$. Moreover, $\phi$ can be chosen such that $x^{\epz}y^{\epi}$ divides $f\circ\phi$.  
\end{lemma}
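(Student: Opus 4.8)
Write $g$ for the reduced function of the statement, topologically conjugate to the quasi-homogeneous model $f=x^{\epz}y^{\epi}\prod_{i=1}^{n}(y^{k}-f_{i}x^{\ell})$ fixed in Section 1, and let $\sigma=\sigma_{1}\circ\cdots\circ\sigma_{p}$ be the latter's desingularization. The plan is to turn the equality of dual trees, recalled just above, into an equality of desingularization maps by matching infinitely near points one blow-up at a time. Since topological conjugacy preserves the dual tree, the minimal resolution of $g$ runs along the same totally ordered chain $D_{1},\dots,D_{N}$ as that of $f$: it uses the same number $p$ of blow-ups, each centered at a point which is either the origin $0$ or the point $\infty$ of the current chart, the $n$ cuspidal branches all meet the principal component $D_{c}$, and the two axis branches meet the two end components. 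What may differ between $g$ and $f$ is only the position, on each intermediate component, of the infinitely near point through which the not-yet-separated branches still pass. Matching these positions, together with straightening the two axis branches, is exactly what the $\phi$ of the statement must achieve.

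The core is an induction on $j=0,1,\dots,p$ constructing a local biholomorphism $\phi_{j}$ such that the first $j$ centers of the resolution of $g\circ\phi_{j}$ coincide with those of $f$. Suppose $\sigma_{1}\circ\cdots\circ\sigma_{j}$ already resolves $g\circ\phi_{j}$ with the correct centers and work in the chart in which this composition is monomial. The not-yet-separated branches of the strict transform cross $D_{j}$ at a single point $q$, while for $f$ the corresponding point is the prescribed $0$ or $\infty$; the dual-tree equality forces $q$ to lie in the smooth part of $D_{j}$, so $q$ and the target point differ by a translation of $D_{j}$ fixing the node coming from $D_{j-1}$. Read in the coordinates $(x,y)$ through the monomial expression of $\sigma_{1}\circ\cdots\circ\sigma_{j}$, this translation becomes a coordinate change of the form $(x,y)\mapsto(x,\,y-a\,x^{m_{j}})$ (or its mirror in the other chart), whose order $m_{j}$ is read off from that expression and is nondecreasing in $j$. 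Because $m_{j}$ is large enough, such a change fixes every center already matched, so setting $\phi_{j+1}$ equal to $\phi_{j}$ followed by this change advances the induction; the very same mechanism, applied on $D_{c}$, separates the $n$ cuspidal branches at distinct interior points and thereby completes the resolution of $g\circ\phi_{p}$ by $\sigma$.

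It remains to straighten the two axis branches, and this is where I expect the real difficulty. After the induction the $x$-axis and $y$-axis branches of $g\circ\phi_{p}$ are two smooth germs meeting transversally at the origin and meeting the end components $D_{1},D_{N}$ at the same special points as the coordinate axes do for $f$; a single local biholomorphism $\psi$ sends them to $\{y=0\}$ and $\{x=0\}$, after which $x^{\epz}y^{\epi}$ divides $g\circ\phi_{p}\circ\psi$ and the \emph{moreover} clause holds. The obstacle is the compatibility of $\psi$ with the matching already achieved: a naive straightening shear would translate the intermediate components and undo the matched cuspidal centers. One must therefore check that, because each axis branch is tangent to the corresponding coordinate axis to exactly the order at which it separates from the cuspidal chain, the straightening $\psi$ is tangent to the identity to sufficiently high order, equivalently that $\psi$ acts on every already-treated component by a scaling fixing its two special points $0$ and $\infty$; such a $\psi$ leaves each matched center (which sits at $0$ or $\infty$) in place. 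Tracking these orders of tangency, and confirming that the assignment of the special points $0,\infty$ on each component is consistent with the multiplicities $\epz,\epi$, is the only delicate point; once it is settled, $\phi=\phi_{p}\circ\psi$ is the desired change of coordinates.
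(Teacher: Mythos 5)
Your overall strategy---matching the free infinitely near points one blow-up at a time by shears, then straightening the axis branches at the end---can in principle be completed, but as written there is a genuine gap, and you flag it yourself without closing it: the compatibility of the final straightening $\psi$ with the matching already achieved is asserted (``once it is settled\dots''), not proved, and this deferred bookkeeping is precisely the content of the lemma. Moreover, the two justifications you sketch for it are incorrect as stated. The $x$-axis branch separates from the cuspidal chain after the \emph{first} blow-up, so the shear $(x,y)\mapsto(x,y-\chi(x))$ that straightens it generally has $\chi'(0)\neq 0$: this $\psi$ is not tangent to the identity to high order, and its linear part moves the point $0$ of $D_1$, so it does not act on the components ``by a scaling fixing $0$ and $\infty$.'' What is true, and what you would need to verify, is weaker and different: such a $\psi$ fixes $\{x=0\}$ pointwise, hence fixes every infinitely near point lying on the transforms of $\{x=0\}$, and (for the $y$-axis straightening) the shear has order $>m$ because after your induction the $y$-axis branch passes through exactly the matched free points $p_0,\dots,p_m$---a consequence of the topological invariance of intersection numbers that you never invoke. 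A smaller slip: your last sentence about applying ``the very same mechanism on $D_c$'' is off; nothing can or need be normalized there, since the $n$ attachment points on the principal component are the moduli $c_i$, do not enter the map $\sigma$ at all, and are distinct by the topological hypothesis.

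The paper avoids the entire difficulty by reversing the order of operations, and this is the idea you are missing. With $k\geq\ell$ and $m$ the integer part of $k/\ell$, two facts do all the work: (a) the centers of $\sigma'_2,\dots,\sigma'_{m+1}$ are exactly the intersection points of the successive transforms of the $y$-axis branch $L_y$ with the divisor---$L_y$ is a maximal contact curve, passing through \emph{all} the free infinitely near points of the cuspidal branches---and (b) once the first $m+1$ blow-ups agree, all remaining centers are satellite, so $\sigma^{m+1}=\sigma'^{m+1}$ already forces $\sigma=\sigma'$. Hence a single $\phi$ sending $\{x=0\}$ to $L_y$ (and $\{y=0\}$ to $L_x$ when $\epi=1$) matches every free center at once, with no induction; when $\epz=0$ one replaces $L_y$ by the auxiliary smooth curve $L=\sigma'^m(\tilde L)$, where $\tilde L$ is a curvette through the last common free point $z$. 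Straightening \emph{first} means there is nothing to undo afterwards, so the compatibility question on which your proof founders never arises, and the divisibility of $g\circ\phi$ by $x^{\epz}y^{\epi}$ is built in rather than retrofitted.
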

\begin{proof} Without loss of generality, we can assume that $k\ge\ell$ and denote by $m$ the integer part of $\frac{k}{\ell}$. Denote by
$$\sigma'=\sigma'_1\circ\ldots\circ \sigma'_h:(\MM',\DD')\rightarrow (\C^2,0)$$ 
the desingularization maps of $f$ and
$$\sigma^i=\sigma_1\circ\ldots\circ \sigma_i\;\,  \text{and}\;\, \sigma'^i=\sigma'_1\circ\ldots\circ \sigma'_i$$
for $i=1,\ldots,h$. It is easy to see that if $\sigma^{m+1}=\sigma'^{m+1}$ then $\sigma=\sigma'$.

Let us first consider the case $\epi=0$. If $\epz=1$, we will show that any  diffeomorphism $\phi$ that sends $\{x=0\}$ to the 
$y$-axis branch $L_y$ of $f$ is the desired diffeomorphism. Indeed, the center of the blowing-up $\sigma'_i$, with $2\leq i\leq m+1$, is the intersection point of the transform 
$(\sigma'^{i-1})^*(L_y)$ and the divisor $(\sigma'^{i-1})^{-1}(0)$. So after the local change of coordinate $\phi$, the $m+1$ first 
blowing-ups of the desingularization map of $f\circ\phi$ and $f$ are the same. Consequently, $f\circ\phi$ and $f$ have the same desingularization map. In the case $\epz=0$, after $m$ first blowing-ups, all the strict transforms of principal branches of $f$ share the same intersection point $z$ with divisor $(\sigma'^{m})^{-1}(0)$.  We take a smooth curve $\tilde{L}$ transverse to the divisor at $z$ and denote by $L$ the image by $\sigma^m$ of $\tilde{L}$.  Then $L$ is a germ of smooth curve of $(\C^2,0)$. With the same reason as above, any  diffeomorphism that sends $\{x=0\}$ to $L$  is the desired diffeomorphism.

Now we consider the case $\epi=1$. Using the same argument as above, if $\epz=1$ then $\phi$ is a diffeomorphism that sends $x$-axis branch $L_x$ and $y$-axis branch $L_y$ to $\{y=0\}$ and $\{x=0\}$ respectively. In the case $\epz=0$, we define $L$ as above then the desired diffeomorphism is the one that sends $L_x$ and $L$ to $\{y=0\}$ and $\{x=0\}$ respectively.
\end{proof}
\begin{figure}
\begin{center}
\includegraphics[scale=1]{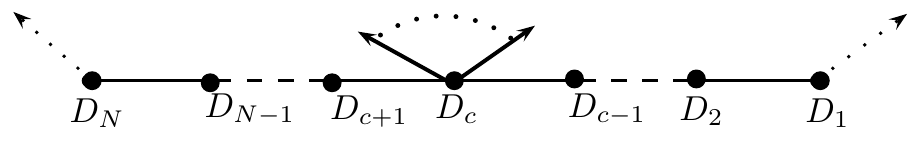}
\caption{Dual tree of topological quasi-homogeneous functions}
\end{center}
\end{figure}

\section{The criteria of topologically quasi-homogeneous foliation}
\begin{notation}
Denote by $\QQ(f)$ the set of all germs of topologically quasi-homogeneous 1-forms in $(\C^2,0)$ whose separatrices are topologically conjugated to $f$ satisfying they admit $\sigma:(\MM,\DD)\rightarrow(\C^2,0)$ as their desingularization map and $x^{\epz}y^{\epi}$ as their invariant curves.
\end{notation}
By the lemma \ref{lem2.3.1}, instead of considering the class of all topologically quasi-homogeneous foliation we can restrict our attention to the subset $\QQ(f)$. 
We will denote by $\OO(k,\ell,d)$ the set of germs of holomorphic functions $q$ satisfying
\begin{equation*}
 q=q_d+q_{d+1}+q_{d+2}\ldots = q_d+ h.o.t.
\end{equation*}
 where $q_d\not\equiv 0$, $q_m$ is $(k,\ell)$-quasi-homogeneous polynomial of degree $m$  and  ``h.o.t.'' stands for higher order term. In what follows, denote by $d=nk\ell+k\epz+\ell\varepsilon_\infty$ the quasi-homogeneous degree of $f$.
\begin{lemma}\label{lem2.4.1}
If a germ of $1$-form $\omega=adx+bdy$ is an element of $\QQ(f)$ then
\begin{itemize}
\item[(i)] $y^{\epi}$ divides $a$, $x^{\epz}$ divides $b$.
\item[(ii)]Let $q:=kxa+\ell yb$, $p:=(k-v)xa+(\ell-u)yb$. Then, $q,p$ are in $\OO(k,\ell,d)$ and
 \begin{equation}\label{pro2-2}
 q_d=c_0x^{\epsilon_0}y^{\varepsilon_\infty}\prod^n_{i=1}(y^k-c_ix^\ell),\,\,c_0\neq0,
\end{equation}
such that all the complex numbers $c_i$, $i\in\{1,\ldots n\}$ are non-zero, different from each other. Moreover, $\mathrm{gcd}(\frac{q_d}{x^\epz y^\epi},\frac{p_d}{x^\epz y^\epi})=1$.
\end{itemize}
\end{lemma}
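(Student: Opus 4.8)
The plan is to read off everything from the contraction of $\omega$ with the two quasi-radial vector fields that become the coordinate directions in the principal chart.

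For (i), recall that by definition of $\QQ(f)$ the axes are separatrices: if $\epz=1$ the curve $\{x=0\}$ is invariant, and if $\epi=1$ the curve $\{y=0\}$ is invariant. Applying the invariance criterion $dg\wedge\omega\in(g)$ to $g=x$ and $g=y$ gives $x\mid b$ and $y\mid a$ respectively, which is exactly $x^{\epz}\mid b$ and $y^{\epi}\mid a$ (the assertion being vacuous when the corresponding exponent is $0$). In particular, since $x,y$ are coprime, it follows that $x^{\epz}y^{\epi}\mid q=kxa+\ell yb$ as well.

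For (ii), the first step is to observe that $q=i_R\omega$ and $p=i_{R'}\omega$, where $R=kx\ddx+\ell y\ddy$ and $R'=(k-v)x\ddx+(\ell-u)y\ddy$. A direct computation in the principal chart, using $(x,y)=(x_c^{k-v}y_c^k,x_c^{\ell-u}y_c^\ell)$, shows $\sigma_*\partial_{y_c}=y_c^{-1}R$ and $\sigma_*\partial_{x_c}=x_c^{-1}R'$, hence
\begin{equation*}
\sigma^*\omega=\frac{p\circ\sigma}{x_c}\,dx_c+\frac{q\circ\sigma}{y_c}\,dy_c .
\end{equation*}
Since a $(k,\ell)$-quasi-homogeneous monomial $x^iy^j$ pulls back to $x_c^{(k-v)i+(\ell-u)j}y_c^{ki+\ell j}$, each quasi-homogeneous component $q_m$ of $q$ satisfies $q_m\circ\sigma=y_c^m\tilde q_m(x_c)$. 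Thus if $\nu$ is the quasi-homogeneous order of $q$, then $\frac{q\circ\sigma}{y_c}=y_c^{\nu-1}\bigl(\tilde q_\nu(x_c)+O(y_c)\bigr)$, so the restriction of the reduced foliation $\tF$ to the principal component $D_c=\{y_c=0\}$ is governed precisely by $\tilde q_\nu$.

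The core of the argument is to identify $q_\nu$. Because $\FF\in\QQ(f)$, on $D_c$ the foliation $\tF$ is reduced, the $n$ cuspidal branches are attached transversally at the $n$ distinct points $x_c=1/c_i$, and $D_c$ meets the adjacent divisor only at the corners; reducedness then forces $\tilde q_\nu$ to have a simple zero at each $1/c_i$ and at no other finite nonzero point. Translating back through $x^iy^j\mapsto x_c^{(k-v)i+(\ell-u)j}y_c^{ki+\ell j}$ and combining with the axis factor $x^{\epz}y^{\epi}$ produced in (i), this gives $q_\nu=c_0x^{\epz}y^{\epi}\prod_{i=1}^n(y^k-c_ix^\ell)$ with the $c_i$ distinct and nonzero. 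Comparing quasi-homogeneous degrees yields $\nu=k\epz+\ell\epi+nk\ell=d$, so $q\in\OO(k,\ell,d)$ with $c_0\neq0$; here non-dicriticality of $\FF$ is what guarantees $\tilde q_\nu\not\equiv0$, i.e. that $D_c$ is genuinely invariant and $q$ has order exactly $d$ rather than higher. The same reasoning applied to $p=i_{R'}\omega$ gives $p\in\OO(k,\ell,d)$, non-dicriticality again excluding the degenerate case $\omega_0\parallel(\ell-u)y\,dx-(k-v)x\,dy$. Conceptually, all of this is the statement that the zero locus of $i_R\omega_0$ of the leading quasi-homogeneous foliation $\omega_0$ is exactly its separatrix set.

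It remains to prove $\gcd\bigl(q_d/x^{\epz}y^{\epi},\,p_d/x^{\epz}y^{\epi}\bigr)=1$. Any common non-axis irreducible factor would be one of the $(y^k-c_ix^\ell)$, and on that curve both $i_R\omega_0$ and $i_{R'}\omega_0$ vanish; since $R\wedge R'=-xy\,\partial_x\wedge\partial_y$, the fields $R,R'$ are linearly independent off the axes, so $\omega_0\equiv0$ along $\{y^k-c_ix^\ell=0\}$. That makes this cuspidal line a curve of singular points of $\omega_0$, contradicting the reducedness of $\tF$ on the principal component; hence the gcd is $1$. The main obstacle is the middle step: rigorously matching the analytic order and multiplicities of $q$ to the combinatorial resolution data — proving that reducedness of $\tF$ forces simple zeros of $\tilde q_\nu$ exactly at the $n$ cuspidal points and nowhere else, and that no cancellation in $kxa+\ell yb$ lowers the order below $d$. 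This is precisely where topological quasi-homogeneity (reduced resolution, absence of saddle-nodes) and non-dicriticality are used.
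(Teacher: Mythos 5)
Your chart identity $\sigma^*\omega=\frac{p\circ\sigma}{x_c}\,dx_c+\frac{q\circ\sigma}{y_c}\,dy_c$ is correct (the paper itself uses it in the proof of Lemma \ref{lem2.5.1}), part (i) is fine, and the mechanism behind your gcd step is the right one (a common zero of $\tilde q_d$ and $\tilde p_d$ at $z_i$ produces a linear part with a zero eigenvalue, i.e.\ a saddle-node). The genuine gap is exactly at the point you yourself call ``the main obstacle'', and your attempt to close it is circular. From the singularities of $\tF$ in the \emph{interior} of $D_c$ you can only conclude that the leading part factors as $q_\nu=c_0x^{i_0}y^{j_0}\prod_{i=1}^n(y^k-c_ix^\ell)$ with $i_0\geq\epz$, $j_0\geq\epi$: under the substitution $x^iy^j\mapsto x_c^{(k-v)i+(\ell-u)j}y_c^{ki+\ell j}$, any extra powers of $x$ or $y$ only raise the vanishing order of $\tilde q_\nu$ at the corners $x_c=0,\infty$, which your count of interior singularities cannot detect. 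Hence ``comparing quasi-homogeneous degrees'' does not yield $\nu=d$; it yields $\nu=ki_0+\ell j_0+nk\ell$ with $(i_0,j_0)$ undetermined, and $\nu=d$ is \emph{equivalent} to $(i_0,j_0)=(\epz,\epi)$, which is what had to be proved. Reducedness at a corner only relates the unknown order of $\tilde q_\nu$ at $x_c=0$ to the (equally unknown) vanishing order of $\sigma^*\omega$ along the adjacent component, so pinning it down propagates along the whole chain of the divisor — there is no purely local shortcut in the principal chart.

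The missing input is precisely the multiplicity theorem of Camacho--Lins Neto--Sad invoked in the paper: for the non-dicritical, saddle-node free foliations considered here, the multiplicity of the pulled-back form equals the multiplicity of the pulled-back separatrix function minus one. The paper's proof applies this at each single blow-up inside an induction on the pair $(k,\ell)$ — one blow-up sends $\QQ(k,\ell,n,\epz,\epi)$ to $\QQ(k-\ell,\ell,n,\epz,1)$, Euclidean-algorithm style, with the base case $k=\ell=1$ computed by hand — and it is this multiplicity control that forces the order along every divisor component, hence $\nu=d$ and the exact monomial factor $x^{\epz}y^{\epi}$ in \eqref{pro2-2}. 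If you import that theorem, your ``one jump to the final chart'' argument does close, and is arguably more direct than the paper's induction; without it, the statements $q,p\in\OO(k,\ell,d)$ are unproved. Two smaller repairs: you should say explicitly why $p$ has order exactly $d$ (if $p_d\equiv0$, the $dx_c$-coefficient vanishes to order $\geq\nu+1$ and every $z_i$ becomes a saddle-node — the same mechanism as your gcd step), and note that ``a curve of singular points of $\omega_d$'' is not by itself a contradiction, since $\omega_d$ is only the initial part of $\omega$; the actual contradiction lives upstairs at the reduced singularities $z_i$.
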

\begin{proof} It is obvious that $(i)$ is always satisfied. For the convenience of proving by induction on the number of blowing-ups,  we will replace $\QQ(f)$ by $\QQ(k,\ell,n,\epz, \epi)$. For $k=\ell=1$, in the blowing-up coordinates $(\bx,y)$ 
\begin{align*}
\sigma^*\omega(\bx,y)&=ya(\bx y, y)d\bx+(\bx a(\bx y,y)+b(\bx y,y))dy\\
&=\frac{1}{\bx}p(\bx y,y)d\bx+\frac{1}{y}q(\bx y,y)dy\\
&=y^{d-1}\left(\left(\frac{y}{\bx}p_d(\bx,1)+ y^2(\ldots)\right)d\bx+\left(q_d(\bx,1)+ y(\ldots)\right)dy\right).
\end{align*}
Because after desingularization all the singularities are not saddle-node, the roots of $q_d(\bx,1)$ are distinct and none of them are a root of $q_d(\bx,1)$ . So we have
\begin{equation*}
q_d(x,y)=c_0x^{\epz}y^{\epi}\prod_{i=1}^n(y-c_ix)\;\,\text{and}\,\;\mathrm{gcd}(\frac{q_d}{x^\epz y^\epi},\frac{p_d}{x^\epz y^\epi})=1.
\end{equation*}

In general, we can assume without loss of generality that $k>\ell$. Let $\sigma_1$ be the standard blowing-up at the origin. By \cite{Cam-Lin-Sad}, the multiplicity of $\sigma_1^*\omega$ 
equals to the multiplicity of $f\circ \sigma_1$ minus $1$. Hence, $\sigma_1^*\omega$ can be written in the blowing-up coordinates $(\bx,y)$ as follows 
\begin{equation*}
 \sigma_1^*\omega(\bx,y)=ya(\bx y, y)d\bx+(\bx a(\bx y,y)+b(\bx y,y))dy=y^{\epz+\epi+n\ell-1}\omega'(\bx,y).
\end{equation*}
If $\omega$ is in $\QQ(k,\ell,n,\epz, \epi)$ then  $\omega'$ is in $\QQ(k-\ell,\ell,n,\epz,1)$. 
Using the induction hypothesis for $\omega'$, we obtain
\begin{align}\label{pro2-4}
(k-\ell)\bx(ya(\bx y,y))+\ell y(\bx a(\bx y,y)+b(\bx y,y))\nonumber\\
=y^{\epz+\epi+n\ell -1}(c'_0 \bx^{\epz} y\prod^n_{i=1}(y^{k-\ell}-c'_i \bx^{\ell})) + h.o.t. 
\end{align}
where the numbers $c'_i$, $i=1,\ldots,n$, are non-zero, different from each other. Replace $\bx y$ by $x$, \eqref{pro2-4} is equivalent to
$$kxa(x,y)+\ell y b(x,y)=c'_0x^{\epz} y^{\epi}\prod^n_{i=1}(y^k-c'_i x^{l}) + h.o.t. $$
Moreover, we have 
\begin{align*}
y^{\epz+\epi+n\ell-1}p'(\bx,y)&=(k-\ell-v+u)\bx(ya(\bx y,y))+(\ell-u)y(\bx a(\bx y,y)+b(\bx y,y))\\
&=(k-v)\bx ya(\bx y,y)+ (\ell-u)yb(\bx y,y).
\end{align*}
Replace $\bx y$ by $x$ and use the induction hypothesis for $p'$ and $q'$, we obtains $\mathrm{gcd}(\frac{q_d}{x^\epz y^\epi},\frac{p_d}{x^\epz y^\epi})=1$. 
\end{proof}
\begin{remark}
The conditions (i) and (ii) in Lemma \ref{lem2.4.1} are not sufficient for determining the elements in $\QQ(f)$. In fact, a $1$-form $\omega$ satisfies these conditions if and only if the foliation $\FF$ defined by $\omega$ satisfies:
\begin{itemize}
\item[(i)] $\{x^{\epz} y^{\epi}=0\}$ is a invariant curve of $\FF$,
\item[(ii)] Let $\sigma$ be the desinglarization map of $f$. After pull-back bay $\sigma$, except the corners, the strict transform $\sigma^*(\FF)$ has $n$ singularities on principle component $D_c$. Moreover at each singularity $\sigma^*(\FF)$ is defined by a $1$-form whose linear part is 
$$\lambda y dx+xdy, \;\text{where}\;\, \lambda\neq 0.$$
\end{itemize}
So for obtaining $\omega\in\QQ(f)$ we need the condition that all the Camacho-Sad indices of all singularities of $\sigma^*(\FF)$ are not in $\Q_{>0}$.  
\end{remark}
\begin{remark}\label{rem3}
An element $\omega\in\QQ(f)$ can be written as
  \begin{equation}\label{pro2-1}
   \omega=\omega_d+\omega_{d+1}+\omega_{d+2}+\ldots
  \end{equation}
  where $\omega_m=a_{m-k}dx+b_{m-\ell}dy$, $a_{m-k}$, $b_{m-\ell}$ are $(k,\ell)$-quasi-homogeneous polynomials of degrees $m-k$, $m-\ell$ respectively and $\omega_d\not\equiv 0$.
\end{remark}
\begin{remark}
The points $z_i=(\frac{1}{c_i},0)$ $(1\leq i\leq n)$ in the coordinates $(x_c,y_c)$ stand for the intersections of strict transforms of separatrices of $\omega$ with the principle component of divisor.
\end{remark}
\section{Logarithmic representation of the initial part}
Consider $\omega\in\QQ(f)$ having the presentation as in \eqref{pro2-1}. Denote by $\lambda_i$ the Camacho-Sad indices of strict transform foliation $\tF=\sigma^*\FF$ defined by $\sigma^*w=\hat{a}dx+\hat{b}dy$ and the principle 
component $\DD_c$. It means that
\begin{equation*}
\lambda_i=i_{z_i}(\tF,\DD_c)=-\mathrm{Res}_{z_i}\ddy\left(\frac{\hat{a}}{\hat{b}}\right)(x,0). 
\end{equation*}
We also denote by $\lambda_0$ and $\lambda_\infty$ the indices of $\tF$ and $\DD_c$ at $z_0=(x_c=0,y_c=0)$ and $z_\infty=(x_c=\infty,y_c=0)$ respectively. Then by \cite{Cam-Sad} we the the relation: 
\begin{equation}\label{2.5.1}
\sum_{i=1}^n\lambda_i+\lambda_0+\lambda_\infty=-1
\end{equation}
and the projective holonomy $h_i$ of $\tF$ at the point $z_i$ satisfying
$$h_i'(0)=exp(2\pi i\lambda_i)$$
Actually, $(\lambda_i)$ only depends on the quasi-homogeneous part $\omega_d$. Moreover $\omega_d$ is completely determined by $(\lambda_i)$ and $(c_i)$ as in the following Lemma:
\begin{lemma}\label{lem2.5.1} With the notation as above, we have
\begin{equation*} 
\omega_d=-q_d\left(\sum_{i=1}^n\lambda_i\frac{d(y^k-c_ix^\ell)}{y^k-c_ix^\ell}+\epz(\ell(\lambda_0+1)-u)\frac{dx}{x}+\epi(k\lambda_\infty+v)\frac{dy}{y}\right),
\end{equation*}
where $$\sum_{i=1}^n\lambda_i+\epz(\lambda_0+\frac{l-u}{l})+\epi(\lambda_\infty+\frac{v}{k})+\frac{1}{k\ell}=0.$$
\end{lemma}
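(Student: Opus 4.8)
The plan is to exploit the defining structure of $\omega_d$ as an element of $\QQ(f)$: by Remark~\ref{rem3}, $\omega_d$ is a $1$-form with $(k,\ell)$-quasi-homogeneous coefficients, and by Lemma~\ref{lem2.4.1}(ii) its associated polynomial $q_d=kx a_{d-k}+\ell y b_{d-\ell}$ factors as in \eqref{pro2-2}. The key observation I would use is that the curve $\{q_d=0\}=\{x^{\epz}y^{\epi}\prod_i(y^k-c_ix^\ell)=0\}$ is exactly the separatrix set of the quasi-homogeneous foliation defined by $\omega_d$, so $\omega_d$ must be tangent to each branch. This forces a logarithmic (Darboux-type) representation, and I would start by writing the \emph{candidate}
\begin{equation*}
\eta=-q_d\left(\sum_{i=1}^n\lambda_i\frac{d(y^k-c_ix^\ell)}{y^k-c_ix^\ell}+\epz(\ell(\lambda_0+1)-u)\frac{dx}{x}+\epi(k\lambda_\infty+v)\frac{dy}{y}\right)
\end{equation*}
with the $\lambda_i$ as yet undetermined, and then showing first that any quasi-homogeneous $1$-form with this separatrix set is a constant multiple of such an $\eta$, and second that matching the Camacho--Sad indices pins down the coefficients.

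For the first step I would argue that since each factor of $q_d$ is an invariant curve, the meromorphic $1$-form $\omega_d/q_d$ has only simple poles along the components of $\{q_d=0\}$ (a pole of order $\ge 2$ or an essential behavior would contradict the reduced, non-dicritical, non-saddle-node structure guaranteed by membership in $\QQ(f)$). A logarithmic $1$-form on $(\C^2,0)$ with simple poles supported on $\{x^{\epz}y^{\epi}\prod_i(y^k-c_ix^\ell)=0\}$ is, up to an exact holomorphic part, a $\C$-combination of the closed forms $\frac{d(y^k-c_ix^\ell)}{y^k-c_ix^\ell}$, $\frac{dx}{x}$, $\frac{dy}{y}$. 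Quasi-homogeneity (the fact that $\omega_d$ has the degree prescribed in Remark~\ref{rem3}) kills the holomorphic exact remainder and fixes the overall scaling by $-q_d$, so $\omega_d=-q_d(\sum_i \mu_i \frac{d(y^k-c_ix^\ell)}{y^k-c_ix^\ell}+\alpha\frac{dx}{x}+\beta\frac{dy}{y})$ for some constants $\mu_i,\alpha,\beta$. A convenient way to make the scaling rigorous is to contract with the quasi-radial field $R=kx\dx+\ell y\dy$: since $R$ is tangent to every quasi-homogeneous level curve, $i_R\left(\frac{d(y^k-c_ix^\ell)}{y^k-c_ix^\ell}\right)=k\ell$, $i_R\frac{dx}{x}=k$, $i_R\frac{dy}{y}=\ell$, which gives a clean linear relation to read off the global constant.

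For the second step I would identify $\mu_i$ with the Camacho--Sad index $\lambda_i$ by passing to the principal-component chart $(x_c,y_c)$ via the monomial map $(x,y)=(x_c^{k-v}y_c^k,x_c^{\ell-u}y_c^\ell)$ recorded in Section~1. Pulling back each logarithmic term and computing the residue $-\mathrm{Res}_{z_i}\dy(\hat a/\hat b)(x,0)$ at the point $z_i=(1/c_i,0)$ shows that the coefficient of $\frac{d(y^k-c_ix^\ell)}{y^k-c_ix^\ell}$ is precisely the index $\lambda_i$; the same computation at the corner points $z_0,z_\infty$ produces the coefficients $\epz(\ell(\lambda_0+1)-u)$ and $\epi(k\lambda_\infty+v)$, where the shifts $-u,+v,+\ell$ enter through the exponents $(k-v,k)$ and $(\ell-u,\ell)$ of the monomial change of charts. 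Finally the displayed identity $\sum_i\lambda_i+\epz(\lambda_0+\frac{\ell-u}{\ell})+\epi(\lambda_\infty+\frac{v}{k})+\frac{1}{k\ell}=0$ is obtained by contracting the established formula with $R$ and using $i_R\omega_d=d\cdot$(the $(k,\ell)$-degree normalization), equivalently by feeding the Camacho--Sad relation \eqref{2.5.1} through the index transformation between the chart $(x_c,y_c)$ and the original coordinates. I expect the main obstacle to be precisely this last bookkeeping: correctly tracking how the Camacho--Sad indices and the $+1$ self-intersection contributions transform under the monomial blow-down, so that the constants $u,v$ and the $\frac{1}{k\ell}$ term emerge with the right signs; the logarithmic representation itself is comparatively formal once tangency and simple-pole structure are in hand.
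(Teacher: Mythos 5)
Your route is viable and genuinely different from the paper's: the paper never mentions closed forms, but instead pulls $\omega_d$ back to the principal chart $(x_c,y_c)$, observes that the polynomial $\bp$ with $y_c\bp(x_c)$ as $dx_c$-coefficient has degree $n$, reads off its $n+1$ values $\bp(1/c_i)=c_0\lambda_i\prod_{j\neq i}(1-c_j/c_i)$ and $\bp(0)=-c_0\lambda_0$ from the index definition, reconstructs $\bp$ by Lagrange interpolation, and then solves the linear system $q_d=kxa_{d-k}+\ell yb_{d-\ell}$, $p_d=(k-v)xa_{d-k}+(\ell-u)yb_{d-\ell}$ for the coefficients; the displayed relation there comes from the Camacho--Sad relation \eqref{2.5.1} combined with the corner values \eqref{2.5.2} computed from the model $d(y^k-x^\ell)$, whereas your contraction with $R$ gives it more directly. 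However, your first step has a genuine gap: the claim that a logarithmic $1$-form with simple poles on $\{q_d=0\}$ is, up to an exact holomorphic part, a $\C$-combination of $\frac{d(y^k-c_ix^\ell)}{y^k-c_ix^\ell}$, $\frac{dx}{x}$, $\frac{dy}{y}$ is \emph{false} without closedness. The form $x\,\frac{dy}{y}$ is logarithmic with a simple pole along $\{y=0\}$ and admits no such decomposition (its residue along $y=0$ is non-constant). Moreover, neither the simple-pole structure nor tangency to the branches supplies closedness: tangency is vacuous here, since \emph{every} quasi-homogeneous $1$-form $\omega$ of degree $d$ with $i_R\omega=q_d$ automatically leaves each branch of $q_d$ invariant, as one checks from $(kx\,dy-\ell y\,dx)\wedge dg=-(\deg g)\,g\,dx\wedge dy$ for quasi-homogeneous $g$. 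The decomposition theorem you are invoking (Cerveau--Mattei type) applies to \emph{closed} meromorphic forms with simple poles, so you must prove $d\left(\omega_d/q_d\right)=0$ first.

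Fortunately the missing step is exactly where quasi-homogeneity earns its keep, and it is short: from the Euler identity $\mathcal{L}_R\,\omega_d=d\,\omega_d$ (the integer $d$ on the right) and Cartan's formula, $i_R(d\omega_d)=d\,\omega_d-dq_d$, whence
\begin{equation*}
i_R\bigl(q_d\,d\omega_d-dq_d\wedge\omega_d\bigr)=q_d\bigl(d\,\omega_d-dq_d\bigr)-\bigl(d\,q_d\bigr)\omega_d+q_d\,dq_d=0,
\end{equation*}
and since contraction by $R$ is injective on $2$-forms wherever $R\neq 0$, this forces $q_d\,d\omega_d=dq_d\wedge\omega_d$, i.e. $d(\omega_d/q_d)=0$. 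With closedness in hand the rest of your plan goes through as sketched: the decomposition with constant residues holds, contracting with $R$ gives $1=k\ell\sum_i\mu_i+k\alpha+\ell\beta+R(h)$, which kills the exact part degree by degree and, after the residue identification $\mu_i=-\lambda_i$ (the sign accounting for the residue $-1$ of $\sigma^*(\omega_d/q_d)$ along $D_c$), yields precisely the stated relation $\sum_{i=1}^n\lambda_i+\epz(\lambda_0+\frac{\ell-u}{\ell})+\epi(\lambda_\infty+\frac{v}{k})+\frac{1}{k\ell}=0$. One further point your sketch glosses over: when $\epz=0$ (resp. $\epi=0$) there is no pole along $\{x=0\}$ (resp. $\{y=0\}$), so the corresponding $d\log$ term simply drops out, which is consistent with the $\epz,\epi$ prefactors; the paper instead keeps $\lambda_0,\lambda_\infty$ as corner indices in all cases and uses the computed values \eqref{2.5.2} to write the formula uniformly, a bookkeeping step your approach happily avoids.
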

\begin{proof}
When $\epz=0$ (resp. $\epi=0$), the value of $\lambda_0$ (resp. $\lambda_\infty$) is totally determined by the couple $(k,\ell)$. Therefore, we can calculate the value of $\lambda_0$ (resp. $\lambda_\infty$) when $\epz=0$ (resp. $\epi=0$) by considering the $1$-form $d(y^k-x^{\ell})$. Since $(x,y)=(x_c^{k-v}y_c^k,x_c^{\ell-u}y_c^\ell)=(x_{c-1}^{k}y_{c-1}^v,x_{c-1}^{\ell}y_{c-1}^u)$, we have
\begin{align*}
d(y^k-&x^\ell)\circ\sigma(x_c,y_c)\\
&=x_c^{k\ell-k u-1}y_c^{k\ell-1}\Big(y_c\big(k\ell-ku-(k\ell-ku+1)x_c\big)dx_c+k\ell x_c(1-x_c)dy_c\Big),\\
d(y^k-&x^\ell)\circ\sigma(x_{c-1},y_{c-1})\\
&=x_{c-1}^{k\ell-1} y_{c-1}^{\ell v-1}\Big(k\ell y_{c-1}(y_{c-1}-1)dx_{c-1}+\ell v x_{c-1}(y_{c-1}-1)dy_{c-1}\Big).
\end{align*}
It implies that 
\begin{equation}\label{2.5.2}
\lambda_0=-\frac{v}{k}\,\;\text{when}\; \epz=0\;\text{and}\; \lambda_\infty=-\frac{\ell-u}{\ell}\;\,\text{when}\;\epi=0.
\end{equation}
Now in the coordinates $(x_c,y_c)$, we get
\begin{equation*}
\sigma^*\omega_d=\frac{1}{x_c}p_d(x_c^{k-v}y_c^k,x_c^{\ell-u}y_c^\ell)dx_c+\frac{1}{y_c}q_d(x_c^{k-v}y_c^k,x_c^{\ell-u}y_c^\ell)dy_c,
\end{equation*}
where $p_d=(k-v)xa_{d-k}+(\ell-u)yb_{d-\ell}$, $q_d=kxa_{d-k}+\ell yb_{d-\ell}$. By Lemma \ref{lem2.4.1}, we get
\begin{equation*}
\sigma^*\omega_d=x_c^{e-1}y_c^{d-1}(y_c\bp(x_c)dx_c+c_0x_c\prod_{i=1}^n(1-c_ix_c)dy_c), 
\end{equation*}
where $\bp$ is a polynomial of degree $n$ satisfying $\bp(x_c^\ell)=\frac{1}{x^{\epz}}p(x_c,1)$ and $e=nk(\ell-u)+\epz(k-v)+\epi(\ell-u)$. The definition of $\lambda_i$ leads to
\begin{align*}
\bp(\frac{1}{c_i})&=c_0\lambda_i\prod_{\substack{j=1\\ j\neq i}}^n\left(1-\frac{c_j}{c_i}\right),\; i=1,\ldots,n,\\
\bp(0)&=-c_0\lambda_0.
\end{align*}
Thanks to the formula of Lagrange polynomial, we get
\begin{equation*}
\bp(x_c)=c_0\left(\sum_{i=1}^n\lambda_ic_ix_c\prod_{\substack{j=1\\ j\neq i}}(1-c_jx_c)-\lambda_0\prod_{j=1}^n(1-c_jx_c)\right).
\end{equation*}
It implies that
\begin{align*}
p_d(x,y)&=c_0x^{\epz}y^{\epi}\left(\sum_{i=1}^n\lambda_ic_i x^\ell\prod_{\substack{j=1\\ j\neq i}}(y^k-c_jx^\ell)-\lambda_0\prod_{j=1}^n(y^k-c_jx^\ell)\right)\\
&=q_d\left(\sum_{i=1}^n\frac{\lambda_ic_ix^\ell}{y^k-c_ix^\ell}-\lambda_0\right).
\end{align*}
Consequently, we have
\begin{equation*}\label{eq2.5.3}
a_{d-k}(x,y)=\frac{\ell p_d-(\ell-u)q_d}{x}=\frac{q_d}{x}\left(\sum_{i=1}^n\lambda_i\frac{\ell c_ix^\ell}{y^k-c_ix^\ell}+(u-\ell(\lambda_0+1))\right).
\end{equation*}
\begin{equation*}\label{eq2.5.4}
b_{d-l}(x,y)=\frac{(k-v)q_d-kp_d}{y}=-\frac{q_d}{y}\left(\sum_{i=1}^n\lambda_i\frac{ky^k}{y^k-c_ix^\ell}+(v+k\lambda_\infty)\right).
\end{equation*}
Because \eqref{2.5.2}, we can  
replace $\lambda_0$ by $\epz\lambda_0+(\frac{u}{\ell}-1)(1-\epz)$ and $\lambda_\infty$ by $\epi\lambda_\infty-\frac{v}{k}(1-\epi)$. Therefore 
$u-\ell(\lambda_0+1)$ becomes $\epsilon_0(u-\ell(\lambda_0+1))$, $v+k\lambda_\infty$ becomes $\epi(v+k\lambda_\infty)$ and the relation \eqref{2.5.1} becomes 
$$\sum_{i=1}^n\lambda_i+\epz(\lambda_0+\frac{l-u}{l})+\epi(\lambda_\infty+\frac{v}{k})+\frac{1}{k\ell}=0.$$
\end{proof}

\section{Decomposition of topologically quasi-homogeneous foliations}
\begin{lemma}\label{lem2.1.7}
Let $\omega$ be a germ of $1$-form in $(\C^2,0)$. Then there exist unique holomorphic functions $h$ and $s$ such that
\begin{equation}\label{2.1.8}
\omega=dh+s(\ell ydx-kxdy).
\end{equation}
\end{lemma}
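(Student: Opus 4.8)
The plan is to write $\omega = a\,dx + b\,dy$ with $a,b$ holomorphic germs at the origin and to reduce the whole statement to the invertibility of a single linear operator on the space of holomorphic germs. Plugging the desired form into \eqref{2.1.8} and using $dh = \dx h\,dx + \dy h\,dy$, matching the coefficients of $dx$ and $dy$ forces
\[
\dx h = a - \ell y\,s, \qquad \dy h = b + kx\,s.
\]
Thus a function $h$ realizing the decomposition exists locally (by the Poincaré lemma on a polydisk centered at $0$) if and only if the $1$-form $(a-\ell y s)\,dx + (b+kx s)\,dy$ is closed. Differentiating and simplifying, the closedness condition $\dy(a-\ell y s) = \dx(b+kx s)$ is exactly
\[
R(s) + (k+\ell)\,s = \dy a - \dx b,
\]
where $R = kx\dx + \ell y\dy$ is the quasi-radial vector field. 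So everything rests on solving this inhomogeneous linear equation for $s$, given the datum $g := \dy a - \dx b$.

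The key step, and the only substantive point, is that the operator $R + (k+\ell)\,\mathrm{Id}$ is invertible on holomorphic germs. In the monomial basis one has $R(x^iy^j) = (ki+\ell j)x^iy^j$, hence
\[
\bigl(R + (k+\ell)\,\mathrm{Id}\bigr)(x^iy^j) = \bigl(k(i+1)+\ell(j+1)\bigr)x^iy^j ,
\]
and since $k,\ell>0$ each eigenvalue $k(i+1)+\ell(j+1)\ge k+\ell$ is strictly positive, in particular nonzero. Writing $g = \sum g_{ij}x^iy^j$, the unique formal solution is $s = \sum \frac{g_{ij}}{k(i+1)+\ell(j+1)}\,x^iy^j$. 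Because the denominators are bounded below by $k+\ell$, the series $s$ is dominated coefficientwise by $\frac{1}{k+\ell}\,g$, so it converges on every polydisk on which $g$ converges; thus $s$ is a genuine holomorphic germ, and it is uniquely determined by the equation.

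Finally, with this $s$ fixed the form $(a-\ell y s)\,dx + (b+kx s)\,dy$ is closed, so the Poincaré lemma yields $h$, unique up to an additive constant which I would normalize by $h(0)=0$. Uniqueness of the pair then follows by linearity: if two decompositions coincide, their difference satisfies $d(h_1-h_2) = -(s_1-s_2)(\ell y\,dx - kx\,dy)$, whose closedness forces $R(s_1-s_2)+(k+\ell)(s_1-s_2)=0$; the injectivity established above gives $s_1=s_2$, and then $h_1-h_2$ is constant. I expect the convergence and invertibility producing $s$ to be the genuine content of the lemma, while the recovery of $h$ is a routine application of the Poincaré lemma.
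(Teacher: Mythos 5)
Your proof is correct, but it runs in the opposite direction from the paper's. The paper first pins down $h$: contracting \eqref{2.1.8} with the quasi-radial field $R=kx\partial_x+\ell y\partial_y$ kills the radial form (since $(\ell y\,dx-kx\,dy)(R)=0$) and yields $R(h)=q:=kxa+\ell yb$, which is solved degree by degree as $h_i=q_i/i$; the remainder $\omega-dh$ is then shown by a direct computation on quasi-homogeneous components to equal $s(\ell y\,dx-kx\,dy)$ with $s_i=(\partial_y a_{i+\ell}-\partial_x b_{i+k})/(i+k+\ell)$. You instead first pin down $s$: applying $d$ kills the exact part and reduces the lemma to the single equation $\bigl(R+(k+\ell)\,\mathrm{Id}\bigr)s=\partial_y a-\partial_x b$, which you solve by inverting an operator that is diagonal on monomials with eigenvalues $k(i+1)+\ell(j+1)\geq k+\ell>0$, after which the Poincar\'e lemma recovers $h$. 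The two arguments are dual uses of the same spectral fact (that $R$ multiplies a quasi-homogeneous term by its degree), but they buy different things: the paper's computation produces the explicit component formulas \eqref{eqpro42} that are reused verbatim in Corollary \ref{cor1.1.9} and in the proof of Theorem~\ref{theoremA}, whereas your route is conceptually cleaner and is the only one of the two to address convergence explicitly (your coefficientwise bound of $s$ by $\frac{1}{k+\ell}g$; the paper leaves the equally easy convergence of $h=\sum_i q_i/i$ implicit). You are also right to observe that $h$ is determined only up to an additive constant, so the uniqueness claim is literally true only under a normalization such as $h(0)=0$; the paper imposes this tacitly through formula \eqref{2.1.9}, since $q$ has no degree-zero part. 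Incidentally, your source term $g=\partial_y a-\partial_x b$ carries the correct sign, consistent with \eqref{eqpro42}; the expressions $\partial_y a_{i+\ell}+\partial_x b_{i+k}$ appearing inside the paper's proof of the lemma are sign typos.
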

\begin{proof}
Suppose that $\omega=adx+bdy$. Let $R=kx\frac{\partial}{\partial x}+\ell y kx\frac{\partial}{\partial y}$ the quasi-radial vector field. Denote by $q=\omega(R)=kxa+\ell yb$. Suppose that there exit $h$ and $s$ satisfying $\eqref{2.1.8}$. We have
\begin{align*}
q=\omega(R)=R(h).
\end{align*}
It implies that 
\begin{equation}\label{2.1.9}
h=\sum_{i=0}^\infty\frac{q_i}{i}.
\end{equation}
where $q=q_0+q_1+q_2+\ldots$ is the decomposition of $q$ into the $(k,\ell)$ quasi-homogeneous polynomials. This proves the uniqueness part. 

Now assume that $h$ is defined as \eqref{2.1.9}. Decompose $a$, $b$, $h$ into the $(k,\ell)$ quasi-homogeneous polynomials, we have
\begin{align*}
a_{i+\ell}-\dx h_{i+k+\ell}&=a_{i+\ell}-\frac{\dx(kxa_{i+\ell}+\ell yb_{i+k})}{i+k+\ell}\\
&=\frac{(i+\ell)a_{i+\ell}-(kx\dx a_{i+\ell}+\ell y\dx b_{i+k})}{i+k+\ell}=\frac{\ell y(\dy a_{i+\ell}+\dx b_{i+k})}{i+k+\ell},\\
b_{i+k}-\dy h_{i+k+\ell}&=b_{i+k}-\frac{\dy(kxa_{i+\ell}+\ell yb_{i+k})}{i+k+\ell}\\
&=\frac{(i+k)b_{i+k}-(kx\dx a_{i+\ell}+\ell y\dx b_{i+k})}{i+k+\ell}=-\frac{k x(\dy a_{i+\ell}+\dx b_{i+k})}{i+k+\ell}.
\end{align*}
This implies the existence of $s$, which is defined by
$$s_i=\frac{\dy a_{i+\ell}+\dx b_{i+k}}{i+k+\ell}.$$
\end{proof}
Using Lemma \ref{lem2.1.7} for the elements in $\QQ(f)$ we obtains:
\begin{corollary}\label{cor1.1.9}
For each $\omega\in\QQ(f)$, there exist unique holomorphic functions $h$ and $s$ such that
\begin{equation}\label{2.6.1}
\omega=\omega_d+dh+s(\ell y dx-kxdy)
\end{equation}
where $\omega_R$ is the quasi-radial form $lydx-kxdy$. Moreover, we have
\begin{equation}\label{eqpro42}
h(x,y)=\sum_{i=1}^\infty\frac{kxa_{d+i-k}+lyb_{d+i-\ell}}{d+i}, s(x,y)=\sum_{i=1}^\infty\frac{\partial_ya_{d+i-k}-\partial_xb_{d+i-\ell}}{d+i}. 
\end{equation}
\end{corollary}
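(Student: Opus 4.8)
The plan is to deduce Corollary~\ref{cor1.1.9} from Lemma~\ref{lem2.1.7} by applying the latter not to $\omega$ itself but to its higher-order tail $\omega-\omega_d$. Indeed, by Remark~\ref{rem3} an element $\omega\in\QQ(f)$ has the $(k,\ell)$-quasi-homogeneous expansion $\omega=\omega_d+\omega_{d+1}+\cdots$ with $\omega_m=a_{m-k}\,dx+b_{m-\ell}\,dy$, so that $\omega-\omega_d=\sum_{m\ge d+1}\omega_m$ is again a germ of $1$-form, now with all quasi-homogeneous components of degree $\le d$ vanishing. Lemma~\ref{lem2.1.7} applies verbatim to $\omega-\omega_d$ and furnishes \emph{unique} $h$ and $s$ with $\omega-\omega_d=dh+s(\ell y\,dx-kx\,dy)$; adding $\omega_d$ to both sides yields the decomposition~\eqref{2.6.1}, and the uniqueness of $h$ and $s$ is inherited directly from the uniqueness clause of the lemma. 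The reason for isolating $\omega_d$ rather than decomposing all of $\omega$ is to preserve the logarithmic normal form of the initial part obtained in Lemma~\ref{lem2.5.1}. This is the whole conceptual content of the statement; what remains is to read off the explicit formulas in~\eqref{eqpro42} from those produced by the lemma.

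Second, I would carry out the grading bookkeeping for $h$. Writing $R=kx\partial_x+\ell y\partial_y$ and $q=(\omega-\omega_d)(R)=kx a+\ell y b$, where $a,b$ now denote the coefficient functions of $\omega-\omega_d$ (which agree with those of $\omega$ in all degrees $\ge d+1$), the lemma gives $h=\sum_m q_m/m$. Since $a$ and $b$ start in quasi-degree $d+1-k$ and $d+1-\ell$ respectively, only the components $q_{d+i}=kx\,a_{d+i-k}+\ell y\,b_{d+i-\ell}$ with $i\ge 1$ occur, and substituting $m=d+i$ gives precisely the first formula in~\eqref{eqpro42}. Note that the denominators $d+i\ge d+1$ never vanish here, so the potential division-by-zero suggested by the bare sum $\sum_{i\ge 0}q_i/i$ in the lemma does not arise for elements of $\QQ(f)$.

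Finally, for $s$ I would match degrees: the quasi-homogeneous piece $s_j$ contributes to the degree-$(j+k+\ell)$ part of $s(\ell y\,dx-kx\,dy)$, so setting $j+k+\ell=d+i$ and inserting it into the lemma's expression for $s_j$ gives $s_j=\bigl(\partial_y a_{d+i-k}-\partial_x b_{d+i-\ell}\bigr)/(d+i)$; summing over $i\ge 1$ produces the second formula in~\eqref{eqpro42}. I expect the only delicate point to be the sign in this last numerator: a direct computation using the Euler identity $kx\partial_x g+\ell y\partial_y g=mg$ for a $(k,\ell)$-quasi-homogeneous $g$ of degree $m$ shows that $a_{m-k}-\partial_x h_m=\tfrac{\ell y}{m}\bigl(\partial_y a_{m-k}-\partial_x b_{m-\ell}\bigr)$, so the correct numerator carries a \emph{minus} sign, in agreement with~\eqref{eqpro42}. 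Apart from this sign check the argument is purely formal; should one require $h$ and $s$ to be genuinely holomorphic rather than formal, convergence follows from that of $\omega$ together with the uniform lower bound $d+i\ge d+1$ on all denominators.
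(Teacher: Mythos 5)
Your proposal is correct and takes essentially the same route as the paper, whose entire proof of the corollary is to apply Lemma \ref{lem2.1.7} to the tail $\omega-\omega_d$ and read off the graded formulas, exactly as you do. Your sign check is also right: the Euler-identity computation $a_{m-k}-\partial_x h_m=\tfrac{\ell y}{m}\bigl(\partial_y a_{m-k}-\partial_x b_{m-\ell}\bigr)$ shows that the formula $s_i=\tfrac{\partial_y a_{i+\ell}+\partial_x b_{i+k}}{i+k+\ell}$ displayed in the paper's proof of Lemma \ref{lem2.1.7} carries a sign typo, and the minus sign in \eqref{eqpro42} is the consistent one.
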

\section{Formal normal forms of topologically quasi-homogeneous foliations}
In this section, we only consider the case $\epz=\epi=1$. Then $f=xy\prod_{i=1}^n(y^k-f_i x^\ell )$ the homogeneous polynomial of degree $d=k\ell n+k+\ell$.

The process of normalization is follows: Let $\FF$ be a topologically quasi-homogeneous foliations. By Lemma \ref{lem2.3.1}, we can assume that $\FF$ is defined by $\omega\in\QQ(f)$. Decompose $\omega$ as in \eqref{2.6.1}. Then the process is divided into two steps. Firstly, we apply consecutively the diffeomorphisms and the unit multiplications to simplify the hamiltonian part $h$ degree by degree. After that, by using the diffeomorphisms and the unit functions  that do not change the term $h$ we will normalize the function $s$.

Denote by  
\begin{equation*}
\QQ^d(f)=\{\omega_d=a_{d - k}dx+b_{d - \ell}dy:\omega_d\in\QQ(f)\}.
\end{equation*}     
For each $\omega_d\in\QQ^d(f)$, we also denote by $\QQ(\omega_d)$ the subset of $\QQ(f)$ containing the $1$-forms admitting $\omega_d$ as their initial part: 
\begin{equation*}
\QQ(\omega_d)=\{\omega'\in\QQ(f):\omega'=\omega_d+\omega'_{d+1}+\omega'_{d+2}\ldots\}.
\end{equation*}
Now let $\omega(x,y)=a(x,y)dx+b(x,y)dy\in\QQ(f)$. For each integer $m\geq 1$, we consider the local change of coordinates 
$\phi(x,y)=\left(x+\alpha(x,y),y+\beta(x,y)\right)$ and the unity function 
$u(x,y)=1+\delta(x,y)$ where $\alpha,\beta,\delta$ are $(k,\ell)$-quasi-homogeneous polynomials of degrees $m+k,m+\ell,m$ respectively. 
The local change of coordinates $\phi$ and the multiplication by $u$ take the form $\omega$ into the form $$\tilde{\omega}=u.\phi^*\omega=\ta dx+\tb dy.$$ 
\begin{lemma}\label{lem2.7.1}
Denote by $\Delta q=\tq-q$, $\Delta a=\ta-a$, $\Delta b=\tb-b$ we have 
$\Delta q_{d+m'}=\Delta a_{d+m'}=\Delta b_{d+m'}=0$ for all $m'<m$ and
\begin{equation}\label{eqlem61}
\Delta q_{d+m}= AU+BV 
\end{equation}
\begin{equation}\label{eqlem62}
 \Delta b_{d+m-\ell}=W+\frac{1}{d+m}\left(mb_{d-\ell}\delta-q_d\partial_y\delta\right),
\end{equation}
where $A=ma_{d-k}+\partial_xq_d, B=mb_{d-\ell}+\partial_yq_d,U=\alpha+\frac{k}{d+m}x\delta, V=\beta+\frac{l}{d+m}y\delta$ and
$W=\dy(a_{d-k}U+b_{d-\ell}V)-(\dy a_{d-k}-\dx b_{d-\ell})U$.
\end{lemma}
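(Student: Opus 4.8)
The plan is to compute $\tilde\omega=u\,\phi^*\omega$ to first order in the perturbations $\alpha,\beta,\delta$, tracking $(k,\ell)$-quasi-homogeneous degrees throughout. With $\phi(x,y)=(x+\alpha,y+\beta)$ one has
\[
\phi^*\omega=\big(a(\phi)(1+\partial_x\alpha)+b(\phi)\partial_x\beta\big)dx+\big(a(\phi)\partial_y\alpha+b(\phi)(1+\partial_y\beta)\big)dy,
\]
where $a(\phi)=a(x+\alpha,y+\beta)$ and similarly for $b(\phi)$, and $\tilde a,\tilde b$ are these two coefficients multiplied by $u=1+\delta$. Since $\partial_x$ lowers quasi-degree by $k$ and $\partial_y$ by $\ell$, the factors $\partial_x\alpha,\partial_y\beta,\delta$ have degree $m$ while $\alpha,\beta$ have degrees $m+k,m+\ell$. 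Expanding $a(\phi)=a+\alpha\partial_x a+\beta\partial_y a+O_2$, where $O_2$ gathers the terms at least quadratic in $(\alpha,\beta)$ and hence of degree $\geq d+2m-k$, every correction to $a$ (resp. $b$) beyond $a$ (resp. $b$) itself begins at degree $d+m-k$ (resp. $d+m-\ell$). This already yields $\Delta a_{d+m'}=\Delta b_{d+m'}=0$ for $m'<m$, and hence $\Delta q_{d+m'}=0$ because $q=kxa+\ell yb$.

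Next I would isolate the degree $d+m-k$ part of $\tilde a$ and the degree $d+m-\ell$ part of $\tilde b$, retaining only the leading pieces $a_{d-k},b_{d-\ell}$ of $a,b$ wherever they are multiplied by a perturbation of positive degree; this produces
\[
\Delta a_{d+m-k}=\alpha\partial_x a_{d-k}+\beta\partial_y a_{d-k}+a_{d-k}\partial_x\alpha+b_{d-\ell}\partial_x\beta+\delta a_{d-k},
\]
\[
\Delta b_{d+m-\ell}=a_{d-k}\partial_y\alpha+\alpha\partial_x b_{d-\ell}+\beta\partial_y b_{d-\ell}+b_{d-\ell}\partial_y\beta+\delta b_{d-\ell}.
\]
Forming $\Delta q_{d+m}=kx\,\Delta a_{d+m-k}+\ell y\,\Delta b_{d+m-\ell}$ and regrouping by $\alpha,\beta,\delta$, I would use the two identities obtained by differentiating $q_d=kxa_{d-k}+\ell yb_{d-\ell}$ together with the Euler relation $kx\partial_x g+\ell y\partial_y g=(\deg g)\,g$ applied to $\alpha$ and $\beta$. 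The $\alpha$- and $\beta$-terms then collapse to $\alpha(\partial_x q_d+ma_{d-k})+\beta(\partial_y q_d+mb_{d-\ell})=A\alpha+B\beta$ and the $\delta$-terms to $\delta q_d$, giving $\Delta q_{d+m}=A\alpha+B\beta+\delta q_d$.

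To reach the packaged form $AU+BV$ I would verify the single identity that dictates the definitions of $U$ and $V$: Euler applied once more to $q_d$ (degree $d$) gives
\[
kxA+\ell yB=m(kxa_{d-k}+\ell yb_{d-\ell})+(kx\partial_x q_d+\ell y\partial_y q_d)=(m+d)q_d,
\]
hence $\tfrac{k}{d+m}xA+\tfrac{\ell}{d+m}yB=q_d$; substituting $U=\alpha+\tfrac{k}{d+m}x\delta$, $V=\beta+\tfrac{\ell}{d+m}y\delta$ converts $A\alpha+B\beta+\delta q_d$ into $AU+BV$, which is \eqref{eqlem61}. For \eqref{eqlem62} I would expand $W$ and observe that its $\delta$-free part coincides exactly with the $\delta$-free part of $\Delta b_{d+m-\ell}$ above; the remaining $\delta$-contributions of $W$ collapse, via Euler on $b_{d-\ell}$ (degree $d-\ell$), to $\tfrac{d}{d+m}\delta b_{d-\ell}$ (the plain-$\delta$ terms) plus $\tfrac{1}{d+m}q_d\partial_y\delta$ (the $\partial_y\delta$ terms), so that $\Delta b_{d+m-\ell}-W=\delta b_{d-\ell}-\tfrac{d}{d+m}\delta b_{d-\ell}-\tfrac{1}{d+m}q_d\partial_y\delta=\tfrac{1}{d+m}(mb_{d-\ell}\delta-q_d\partial_y\delta)$, as asserted.

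The computation is elementary throughout; the only genuine subtlety, and the step I expect to demand the most care, is the degree bookkeeping — keeping exactly the terms that survive at the one relevant quasi-degree — together with the disciplined use of Euler's identity and the recognition that $U$ and $V$ are engineered precisely so that the leftover $\delta q_d$ is absorbed through $kxA+\ell yB=(d+m)q_d$.
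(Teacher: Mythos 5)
Your proposal is correct and follows essentially the same route as the paper: first-order expansion of $u\,\phi^*\omega$ with quasi-degree bookkeeping to get the vanishing for $m'<m$ and $\Delta q_{d+m}=A\alpha+B\beta+\delta q_d$, absorption of the $\delta q_d$ term via the Euler identity $kxA+\ell yB=(d+m)q_d$ defining $U,V$, and for \eqref{eqlem62} the substitution $\alpha=U-\tfrac{k}{d+m}x\delta$, $\beta=V-\tfrac{\ell}{d+m}y\delta$ combined with Euler applied to $b_{d-\ell}$ (your reading of this step in the reverse direction, expanding $W$ and matching $\delta$-free and $\delta$-parts, is the same computation). All the identities you invoke check out, including the collapse of the $\delta$-contributions of $W$ to $\tfrac{d}{d+m}b_{d-\ell}\delta+\tfrac{1}{d+m}q_d\partial_y\delta$.
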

\begin{proof}
We have
$$\phi^*\omega=\left(\left(1+\dxal\right).a\circ\phi+ \dxbe. b\circ\phi\right)dx+\left( \dyal . a\circ\phi+\left(1+\dybe\right).b\circ\phi\right) dy.$$
It implies that
\begin{align}
\tq&=u\left((kx+kx.\dxal+\ell y.\dyal) a\circ\phi+(\ell y + kx.\dxbe+\ell y.\dybe)b\circ\beta\right)\nonumber\\
&=u\left((kx+(k+m)\alpha)a\circ\phi+(\ell y+(\ell+m)\beta)b\circ\phi\right).\nonumber
\end{align}
We also have
\begin{align}
a\circ\phi-a&=\sum_{ki+\ell j\ge d-k}a_{ij}(x+\alpha)^i(y+\beta)^j- \sum_{ki+\ell j\ge d-k}a_{ij}x^i y^j \nonumber\\
&=\sum_{ki+\ell j\ge d-k}a_{ij}i x^{i-1}\alpha y^j + \sum_{ki+\ell j\ge d-k}a_{ij}x^{i} j y^{j-1}\beta + h.o.t. \nonumber\\
&=\alpha\dx a+\beta\dy a+h.o.t.,\label{14}\\
b\circ\phi-b&=\sum_{ki+\ell j\ge d-\ell}b_{ij}(x+\alpha)^i(y+\beta)^j- \sum_{ki+\ell j\ge d-\ell}b_{ij}x^i y^j\nonumber\\
&=\sum_{ki+\ell j\ge d-\ell}b_{ij}i x^{i-1}\alpha y^j + \sum_{ki+\ell j\ge d-\ell}b_{ij}x^{i} j y^{j-1}\beta + h.o.t. \nonumber\\
&=\alpha\dx b+\beta\dy b+h.o.t..\label{15}
\end{align}
It follows that $\Delta q_{d+m'}=0$ for all $0\le m'<m$ and
\begin{align*}
\Delta_{q+m}&=kx(\alpha\dx a+\beta\dy a)+ \ell y(\alpha\dx b+\beta\dy b) + (k+m)\alpha a + (\ell + m)\beta b + \delta q_d \nonumber \\
&=((k+m)a+kx \dx a+\ell y\dx b )\alpha + ((\ell +m)b+ kx\dy a +\ell y \dy b)\beta +\delta q_d \nonumber \\
&=\left(ma_{d-k}+\dx q_d\right)\alpha + \left(mb_{d-\ell}+\dy q_d\right)\beta+\delta q_d. 
\end{align*}
Substituting $$q_d=\frac{k}{d+m}(ma_{d-k}+\dx q_d)+\frac{\ell}{d+m}(mb_{d-\ell}+\dy q_d)$$ we get \eqref{eqlem61}.
Using again \eqref{14} and \eqref{15}, we obtain that $\Delta b_{d-l+m'}=0$ for all $0\le m'<m$ and
\begin{equation}
\Delta b_{d-l+m}=\alpha\dx b_{d-\ell}+\beta \dy b_{d-\ell}+\dy\alpha a_{d-k}+\dy\beta b_{d-\ell}+\delta b_{d-\ell}.\label{16}
\end{equation}
Substituting $\alpha=U-\frac{k}{d+m}x\delta$ and $\beta=V-\frac{\ell}{d+m}y\delta$ into \eqref{16}, we get
\begin{equation*}
\Delta b_{d-l+m}=\dx b_{d-\ell}U+\dy b_{d-\ell}V+a_{d-k}\dy U+b_{d-\ell}\dy V+\frac{1}{d+m}\left(mb_{d-\ell}\delta-q_d\dy\delta\right).
\end{equation*}
It implies \eqref{eqlem62} by using the fact that
\begin{equation*}
 \dx b_{d-\ell}U+\dy b_{d-\ell}V+a_{d-k}\dy U+b_{d-\ell}\dy V=\dy(a_{d-k}U+b_{d-\ell}V)-(\dy a_{d-k}-\dx b_{d-\ell})U.
\end{equation*}
\end{proof}

Denote by $[a[$ the usual integer part $a$: $[a[\leq a < [a[+1$, and $]a]$ the strict integer  part of $a$ defined by $]a]<a\leq ]a]+1$. Then the number of integer points in a closed interval $[a,b]$ is given by $[b[-]a]$. 
\begin{lemma}\label{lem6}Let $e_m$ be the cardinality of the set  $\{(i,j)\in\N^2:ki+\ell j=m\}$. Then  $e_m=[\frac{mu}{\ell}[-]\frac{mv}{k}]$. 
\end{lemma}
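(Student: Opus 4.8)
The plan is to reinterpret $e_m$ as a lattice-point count on a line segment. The quantity $e_m$ is exactly the number of pairs $(i,j)\in\N^2$ solving $ki+\ell j=m$, and since $\gcd(k,\ell)=1$ the set of all \emph{integer} solutions of $ki+\ell j=m$ is a coset of the rank-one lattice spanned by $(\ell,-k)$. The idea is to exploit the Bézout-type relation $ku-\ell v=1$ coming from the desingularization data to produce an explicit particular solution, so that the two nonnegativity constraints $i\ge 0$, $j\ge 0$ become a single pair of bounds on a parameter $t$, and then to count the integers $t$ in the resulting closed interval with the formula already recorded above.

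Concretely, I would take $(i,j)=(mu,-mv)$, which is an integer solution because $k(mu)+\ell(-mv)=m(ku-\ell v)=m$. Every integer solution is then of the form
\[
(i,j)=(mu-\ell t,\,-mv+kt),\qquad t\in\Z,
\]
since $(\ell,-k)$ generates the homogeneous solutions. Imposing $i\ge 0$ and $j\ge 0$ yields $t\le \tfrac{mu}{\ell}$ and $t\ge \tfrac{mv}{k}$ respectively, so that $(i,j)\in\N^2$ if and only if $t$ is an integer in the closed interval $\left[\tfrac{mv}{k},\tfrac{mu}{\ell}\right]$. Applying the stated count for the number of integers in a closed interval gives $e_m=[\tfrac{mu}{\ell}[-]\tfrac{mv}{k}]$, which is the assertion.

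The computation is essentially routine, so the only point demanding care is the behaviour at the endpoints, namely the cases in which $\tfrac{mu}{\ell}$ or $\tfrac{mv}{k}$ is itself an integer; this is precisely why the ordinary integer part $[\cdot[$ is used on the upper bound and the strict integer part $]\cdot]$ on the lower bound, and the identity $[b[-]a]$ for closed intervals treats both regimes uniformly. As a sanity check one may note that $\gcd(u,\ell)=\gcd(v,k)=1$ (again from $ku-\ell v=1$), whence $\tfrac{mu}{\ell}\in\Z\iff \ell\mid m$ and $\tfrac{mv}{k}\in\Z\iff k\mid m$; but no case distinction is actually needed, since the single interval-counting formula already delivers the correct value of $e_m$ in all situations.
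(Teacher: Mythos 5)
Your proposal is correct and follows essentially the same route as the paper: the paper's integer $c$ in the closed interval $\left[\frac{mv}{k},\frac{mu}{\ell}\right]$ is exactly your parameter $t$, with the same correspondence $(i,j)=(mu-\ell t,\,kt-mv)$ derived from the Bézout relation $ku-\ell v=1$, and the same interval-counting formula $[b[-]a]$. Your lattice-coset framing (particular solution plus homogeneous solutions generated by $(\ell,-k)$, using $\gcd(k,\ell)=1$) merely packages in one step what the paper verifies as the two inequalities $e'_m\leq e_m$ and $e_m\leq e'_m$.
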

\begin{proof}
Denote by $e'_m=[\frac{mu}{\ell}[-]\frac{mv}{k}]$ the number of integer points in the closed interval $[\frac{mu}{\ell},\frac{mv}{k}]$.
For each integer $c$ in $[\frac{mu}{\ell},\frac{mv}{k}]$. Let's put $i=mu-c\ell$, $j=ck-mv$, then
$$ki+\ell j=kmu-ck\ell+ck\ell-\ell mv=m(ku-\ell v)=m.$$
So $e'_m\leq e_m$. Now, if there exit two positive integers $i$, $j$ such that $ki+\ell j=m$ then
\begin{align*}
mu&=kui+\ell uj=(\ell v+1)i+\ell uj=\ell(vi+uj)+i,\\
mv&=kvi+\ell vj=kvi+(ku-1)j=k(vi+uj)-j.
\end{align*}
Therefore, $$vi+uj=\frac{mu-i}{\ell}=\frac{mv+j}{k}\in [\frac{mu}{\ell},\frac{mv}{k}].$$
It implies that $e_m\leq e'_m$. So $e_m=[\frac{mu}{\ell}[-]\frac{mv}{k}]$. 
\end{proof}

\begin{lemma}\label{lem2.7.2}
If $\lambda_i\not\in\Q$ for $i=0,1,\ldots,n-1,n,\infty$  then $\mathrm{gcd}(A,B)=1$ for all $m\in\N$.
\end{lemma}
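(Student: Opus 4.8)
The plan is to reduce the statement to a divisibility question about the quasi-homogeneous polynomial $q_d$, and then to dispose of each of its irreducible factors one at a time using the explicit logarithmic form of $\omega_d$ obtained in Lemma~\ref{lem2.5.1}.

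I would start from an Euler relation. By the definitions of $A$ and $B$ in Lemma~\ref{lem2.7.1} and the identity $q_d=kxa_{d-k}+\ell yb_{d-\ell}$,
\begin{equation*}
kxA+\ell yB=m(kxa_{d-k}+\ell yb_{d-\ell})+(kx\dx q_d+\ell y\dy q_d)=(m+d)q_d,
\end{equation*}
the last equality using Euler's identity for the $(k,\ell)$-quasi-homogeneous polynomial $q_d$ of degree $d$. Since $m+d>0$, every common divisor of $A$ and $B$ divides $q_d$. By Lemma~\ref{lem2.4.1} we have $q_d=c_0xy\prod_{i=1}^n(y^k-c_ix^\ell)$, and each cuspidal factor $y^k-c_ix^\ell$ is irreducible because $\mathrm{gcd}(k,\ell)=1$. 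Hence it suffices to prove that none of the irreducible factors $x$, $y$, $y^k-c_ix^\ell$ divides both $A$ and $B$.

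Next I would substitute the formulas for $a_{d-k}$ and $b_{d-\ell}$ produced in the proof of Lemma~\ref{lem2.5.1}, together with the logarithmic derivatives of $q_d$, to obtain
\begin{equation*}
A=\frac{q_d}{x}\left(\sum_{i=1}^n(m\lambda_i-1)\frac{\ell c_ix^\ell}{y^k-c_ix^\ell}+m(u-\ell(\lambda_0+1))+1\right),
\end{equation*}
\begin{equation*}
B=\frac{q_d}{y}\left(\sum_{i=1}^n(1-m\lambda_i)\frac{ky^k}{y^k-c_ix^\ell}+1-m(v+k\lambda_\infty)\right).
\end{equation*}
Clearing the denominator $\prod_i(y^k-c_ix^\ell)$ writes $A=c_0y\tilde A$ and $B=c_0x\tilde B$ with $\tilde A,\tilde B$ polynomials, so each candidate factor is ruled out by a single evaluation. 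The factor $x$ already divides $B$, so it is common only if it divides $A$; reducing $A$ modulo $x$ annihilates every term carrying $x^\ell$ and leaves $c_0y^{kn+1}\bigl(m(u-\ell(\lambda_0+1))+1\bigr)$, nonzero for all $m\in\N$ because $\lambda_0\notin\Q$. Symmetrically, $y$ divides $A$, and reducing $B$ modulo $y$ leaves a nonzero multiple of $x^{\ell n+1}$ with coefficient $1-m(v+k\lambda_\infty)\neq0$ since $\lambda_\infty\notin\Q$. Finally, restricting $\tilde A$ to the curve $\{y^k=c_ix^\ell\}$ kills all summands except the $i$-th, leaving $(m\lambda_i-1)c_i\ell\prod_{j\neq i}(c_i-c_j)\,x^{\ell n}$, nonzero for all $m$ because $\lambda_i\notin\Q$ and the $c_j$ are pairwise distinct. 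In each case the candidate fails to divide one of $A$, $B$, whence $\mathrm{gcd}(A,B)=1$.

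The main obstacle is the treatment of the cuspidal factors $y^k-c_ix^\ell$: one must check that the restriction to the curve isolates exactly one summand of $\tilde A$ and that its coefficient is the product of the \emph{nonvanishing} number $m\lambda_i-1$ with the Vandermonde-type factor $\prod_{j\neq i}(c_i-c_j)$. This is precisely where the full hypothesis $\lambda_i\notin\Q$ for every index $i\in\{0,1,\dots,n,\infty\}$ is used: were some $\lambda_i$ rational, one could choose $m$ with $m\lambda_i=1$ (or $m(u-\ell(\lambda_0+1))=-1$, etc.), producing a genuine common factor and breaking the conclusion.
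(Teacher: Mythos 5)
Your proof is correct and takes essentially the same route as the paper's: both express $A$ and $B$ through the logarithmic formulas of Lemma~\ref{lem2.5.1}, use the Euler relation $kxA+\ell yB=(d+m)q_d$ to force any common divisor into $q_d$, and then rule out each irreducible factor $x$, $y$, $y^k-c_ix^\ell$ by an evaluation made nonzero by $\lambda_i\notin\Q$. Your explicit computations merely flesh out the paper's one-line coprimality claims, and in passing your displayed formulas fix two small typos in the paper's versions of $A$ and $B$ (the missing factors of $m$ in $m(u-\ell(\lambda_0+1))+1$ and $1-m(v+k\lambda_\infty)$).
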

\begin{proof}
By Lemma \ref{lem2.5.1}, 
\begin{align*}
A&=ma_{d-k}+\partial_x q_d=q_d\left(\sum_{i=1}^n(m\lambda_i-1)\frac{\ell c_ix^{\ell-1}}{y^k-c_ix^\ell}+\frac{1}{x}(u-\ell(\lambda_0+1)+1)\right),\\
B&=mb_{d-\ell}+\partial_y q_d=q_d\left(\sum_{i=1}^n(1-m\lambda_i)\frac{k y^{k-1}}{y^k-c_ix^\ell}+\frac{1}{y}(1-(v+k\lambda_\infty))\right).
\end{align*}
Suppose that $g=\mathrm{gcd}(A,B)$. Since $\lambda_i\not\in\Q$, we have $\mathrm{gcd}(A,x)=1$, $\mathrm{gcd}(B,y)=1$ and $\mathrm{gcd}(A,y^k-c_ix^\ell)=1$ for all $i=1,\ldots,n$. Therefore $\mathrm{gcd}(g,q_d)=1$. Moreover,
we have $g|q_d$ since $kxA+\ell yB=(d+m)q_d$. It implies that $\mathrm{gcd}(A,B)=1$.
\end{proof}

The following lemma will be used to normalize the hamiltonian part:
\begin{notation}
We say that a $1$-form $\omega_d\in\QQ^d(f)$ satisfies the \emph{generic condition} if $\lambda_i\not\in\Q$ for all  $i=0,1,\ldots,n-1,n,\infty$ and the coefficients of $A$ and $B$ satisfy the condition of non-vanishing determinant of matrix $M_m$ in \ref{2.2.7} for $m=1,\ldots,k\ell n-1$.
\end{notation}

\begin{lemma}\label{lem1.2.5}
Let $\omega_d\in\QQ^d(f)$ satisfy the generic condition and $\omega\in\QQ(\omega_d)$. Using the same notation as in Lemma \ref{lem2.7.1}, for each $m\geq 1$ there exist a diffeomorphism $\phi(x,y)=(x+x\alpha,y+y\beta)$ and a unity $u=1+\delta$ where $\alpha,\beta$, $\delta$ are quasi-homogeneous 
polynomials of degree $m$ such that $\tq_{d+m}=q_{d+m}+AU+BV=xy\tq'_{d+m}$ satisfies the conditions
\begin{itemize}
\item $deg_x\tq'_{d+m}\leq \ell n-1$ and $deg_y\tq'_{d+m}\leq kn-1
$  if $1\leq m\leq k\ell n-1$,
\item $\tq_{d+m}=0$ if $m\geq k\ell n$.  
\end{itemize}
\end{lemma}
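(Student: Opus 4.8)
The plan is to reduce the statement to a purely algebraic question about the ideal generated by $A$ and $B$ in the weighted polynomial ring, and then to settle that question by a Koszul/Hilbert-series computation combined with the generic condition.

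First I would record the divisibility forced by $\QQ(f)$ in the case $\epz=\epi=1$. By Lemma \ref{lem2.4.1}(i) one has $y\mid a$ and $x\mid b$, so every graded piece $q_{d+m}=kxa_{d+m-k}+\ell yb_{d+m-\ell}$ is divisible by $xy$; write $q_{d+m}=xy\,r_{d+m}$ with $r_{d+m}$ quasi-homogeneous of degree $k\ell n+m$. Using the explicit shape of $q_d$ in Lemma \ref{lem2.4.1}, the same computation gives $y\mid A$ and $x\mid B$, so I set $A=y\hat A$, $B=x\hat B$ with $\hat A,\hat B$ quasi-homogeneous of degree $k\ell n$. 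Since the diffeomorphism in Lemma \ref{lem1.2.5} has the form $\phi=(x+x\alpha,y+y\beta)$, the quantities of Lemma \ref{lem2.7.1} factor as $U=xU_0$, $V=yV_0$ with $U_0,V_0$ of degree $m$ (the function $\delta$ being free and not entering $\Delta q_{d+m}$). Hence Lemma \ref{lem2.7.1} yields $\tq_{d+m}=xy\,\tq'_{d+m}$ with
$$\tq'_{d+m}=r_{d+m}+\hat A U_0+\hat B V_0.$$
Thus the whole statement becomes: choose $U_0,V_0$ of degree $m$ so that $r_{d+m}+\hat A U_0+\hat B V_0$ lies in the span $\NN_{k\ell n+m}$ of the monomials $x^iy^j$ with $0\le i\le\ell n-1$, $0\le j\le kn-1$ when $1\le m\le k\ell n-1$, and vanishes when $m\ge k\ell n$.

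Next I would exploit that $\hat A,\hat B$ form a regular sequence. By Lemma \ref{lem2.7.2} the generic condition gives $\gcd(A,B)=1$, hence $\gcd(\hat A,\hat B)=1$, so their common zero locus is $\{0\}$ and $(\hat A,\hat B)$ is a complete intersection in the graded ring $S=\C[x,y]$ with weights $(k,\ell)$. The Koszul resolution $0\to S(-2k\ell n)\to S(-k\ell n)^2\to S\to S/(\hat A,\hat B)\to 0$ gives the Hilbert series
$$\frac{(1-t^{k\ell n})^2}{(1-t^k)(1-t^\ell)}=\Big(\sum_{i=0}^{\ell n-1}t^{ki}\Big)\Big(\sum_{j=0}^{kn-1}t^{\ell j}\Big),$$
so in every degree $N$ the dimension of $(S/(\hat A,\hat B))_N$ equals the number of exponents counted by $\NN_N$, and $(S/(\hat A,\hat B))_N=0$ as soon as $N>2k\ell n-k-\ell$.

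With $N=k\ell n+m$ this splits into two cases. For $m\ge k\ell n$ one has $N\ge 2k\ell n>2k\ell n-k-\ell$, so the degree-$N$ part of the ideal is all of $W_N$; solving $\hat A U_0+\hat B V_0=-r_{d+m}$ then gives $\tq_{d+m}=0$. For $1\le m\le k\ell n-1$ the map $\Phi_m\colon(U_0,V_0)\mapsto\hat A U_0+\hat B V_0$ is injective, since the minimal syzygy $(\hat B,-\hat A)$ lives in degree $k\ell n>m$; combining injectivity with the Hilbert-series count gives $2e_m+\dim\NN_N=e_N$. The generic condition $\det M_m\ne0$ is then exactly the statement that the combined map $(U_0,V_0,\nu)\mapsto\hat A U_0+\hat B V_0+\nu$ from $W_m\times W_m\times\NN_N$ to $W_N$ is an isomorphism, i.e. $W_N=\mathrm{Image}(\Phi_m)\oplus\NN_N$. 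Projecting $-r_{d+m}$ along this decomposition produces $U_0,V_0$ with $r_{d+m}+\hat A U_0+\hat B V_0\in\NN_N$, which is the asserted degree bound; I then recover $\alpha,\beta$ from $U_0,V_0$ (taking $\delta=0$, say) to get the desired $\phi$ and $u$.

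The hard part is the transversality needed in the range $1\le m\le k\ell n-1$: the Koszul computation supplies the dimension of a complement to the ideal for free, but it does not by itself guarantee that the \emph{specific} monomials of $\NN_N$ represent a basis of $S/(\hat A,\hat B)$ in that degree. That is precisely what $\det M_m\ne0$ encodes, and it is the only place where genericity beyond $\lambda_i\notin\Q$ is used; for $m\ge k\ell n$ no genericity is required, since $\NN_N=0$ and surjectivity follows from coprimality alone.
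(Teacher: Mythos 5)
Your proposal is correct and follows essentially the same route as the paper: the same reduction to $\bA\bU+\bB\bV$ with $\bA=A/y$, $\bB=B/x$, the same case split at $m=k\ell n$, coprimality from Lemma \ref{lem2.7.2} controlling the kernel via the Koszul syzygy $(\bB,-\bA)$, and the identification of the generic condition $\det M_m\neq 0$ with bijectivity of the projected map onto the complement of the box of monomials $x^iy^j$, $i\leq \ell n-1$, $j\leq kn-1$. The only differences are cosmetic: your Hilbert-series computation $\frac{(1-t^{k\ell n})^2}{(1-t^k)(1-t^\ell)}$ cleanly proves the dimension identities ($2e_m-e_{m+k\ell n}=e_{m-k\ell n}$ and $\dim NQP^\perp(k\ell n+m)=2e_m$) that the paper merely asserts, while you omit the paper's verification that $\det M_m$ is not identically zero (via $\frac{\partial^{2e_m}\det M_m}{(\partial A_0)^{e_m}(\partial B_0)^{e_m}}=(e_m!)^2$), which is legitimate here since the generic condition is a hypothesis of the lemma.
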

\begin{proof}
Denote by $QP(m)$ the set of all $(k,\ell)$-quasi-homogeneous polynomials of degrees $m$. Then $QP(m)$ is a vector space of dimension $e_m$. Denote by $\bA=\frac{A}{y}$, $\bB=\frac{B}{x}$, $\bU=\frac{U}{x}$, $\bV=\frac{V}{y}$. By Lemma \ref{lem2.7.1}, 
$$\frac{\tq_{d+m}}{xy}=\frac{q_{d+m}}{xy}+\bA\bU+\bB\bV.$$

Consider the linear map
\begin{equation*}
 \Psi_m:QP(m)\times QP(m)\rightarrow QP(k\ell n+m)
\end{equation*}
\begin{equation*}
(\bU,\bV)\mapsto \bA \bU+\bB \bV.
\end{equation*}

\underline{Case $m \ge k\ell n$}. By Lemma \ref{lem2.7.2}, $\bA$ and $\bB$ are coprime. It implies that
\begin{equation}\label{eq13}
\Ker\Psi_m=\{(Z\bB,-Z\bA), Z\in QP(m-k\ell n)\}.
\end{equation}
Hence $\Psi_m$ is surjective due to the equality of dimensions of vector space
\begin{equation*}
e_{m}+e_{m}-e_{m+k\ell n}=e_{m-k\ell n}.
\end{equation*}
Consequently, there exists $\phi$ such that $\tq_{d+m}=AU + BV + q_{d+m}=0$.
 
\underline{Case $1\leq m\leq k\ell n-1$}. In this case $\mathrm{Ker}\Psi_m=\{0\}$. Denote by $NQP(k\ell n+m)$ the subspace of $QP(k\ell n+m)$ generalized by all the monomials $g(x,y)$ satisfying
\begin{equation*}
deg_xg\leq \ell n-1,\;deg_yg\leq k n-1.
\end{equation*}
We also denote by $NQP^\perp(k\ell n+m)$ the subspace of $QP(k\ell n+m)$ generalized by all the monomials $g(x,y)$ such that
\begin{equation*}
deg_xg\geq \ell n\;\text{or}\;deg_yg\geq k n.
\end{equation*}
Denote by $pr_m$ the standard projection
\begin{equation*}
pr_m:QP(k\ell n+m)\rightarrow NQP^\perp(k\ell n+m).
\end{equation*}
The proof is reduced to show that in a generic condition for all $q\in QP(k\ell n+m)$ there exists $\bA\bU+\bB \bV\in \Ima\Psi_m$ such that $$q+\bA\bU+\bB\bV\in NQP(k\ell n+m).$$
Since $$\mathrm{dim}NQP^\perp(k\ell n+m)=2e_m=\mathrm{dim}QP(m)\times QP(m),$$
this is equivalent to prove that in a generic condition $pr_m\circ\Psi_m$ is bijective.

Because $\bA,\bB\in QP(k\ell n)$, we can write $\bA=\sum_{i=0}^{n}A_ix^{\ell(n-i)}y^{ki}$, $\bB=\sum_{i=0}^{n}B_ix^{\ell i}y^{k(n-i)}$. Then the matrix representation $M_m$ of the linear map $pr_m\circ\Psi_m$ is given by
\begin{equation}\label{2.2.7}
M_m={\tiny{
\left[
\begin{array}{ccccc|ccccc}
A_0      & 0        & \ldots & 0      & 0 	  	   & B_n        & 0          & \ldots & 0		& 0        \\
A_1      & A_0      & \ldots & 0      & 0	  	   & B_{n-1}    & B_n        & \ldots & 0		 & 0 \\
\vdots   & \vdots   & \ddots & \vdots & \vdots 	   & \vdots     & \vdots     & \ddots & \vdots & \vdots	 \\
A_{e_m-2}& A_{e_m-3}& \ldots & A_0    & 0    	   & B_{n-e_m+2}& B_{n-e_m+3}& \ldots & B_n    &0      \\
A_{e_m-1}& A_{e_m-2}& \ldots & A_1    & A_0    	   & B_{n-e_m+1}& B_{n-e_m+2}& \ldots & B_{n-1}    & B_n  \\ \hline
A_n      & A_{n-1}  & \ldots & A_{n-e_m+2}& A_{n-e_m+1} & B_0        & B_{1}      & \ldots & B_{e_m-2} & B_{e_m-1}\\
0        & A_n      & \ldots & A_{n-e_m+3}& A_{n-e_m+2} & 0          & B_0        & \ldots & B_{e_m-3} & B_{e_m-2}\\
\vdots   & \vdots   & \ddots & \vdots     &\vdots 	   & \vdots     & \vdots     & \ddots & \vdots	& \vdots \\
0		 & 0        & \ldots & A_n          & A_{n-1}    	   & 0          & 0          & \ldots & B_0    & B_1\\
0		 & 0        & \ldots & 0          & A_{n}    	   & 0          & 0          & \ldots & 0    & B_0
\end{array}
\right]}}
\end{equation}
The determinant $\mathrm{det}M_m$ is a polynomial in $A_i$ and $B_j$. Since $\frac{\partial^{2e_m}\mathrm{det}M_m}{(\partial A_0^{e_m})(\partial B_0)^{e_m}}=(e_m!)^2\neq 0$, such polynomial is not identically zero. Therefore the condition $\mathrm{det}M_m\neq 0$ is satisfied for generic $\omega_d\in\QQ^d(f)$. 
\end{proof}
The following lemma will be used to normalize the radial part: 
\begin{lemma}\label{lem1.2.6}
If $\omega_d\in\QQ^d(f)$ satisfies $\lambda_i\not\in\Q$ for all $i=0,1,\ldots, n-1,n,\infty$, then there exist $\phi(x,y)=(x+x\alpha,y+y\beta)$ and $u(x,y)=1+\delta(x,y)$ where $\alpha,\beta$ and $\delta$ are quasi-homogeneous 
polynomials of degree $m$ such that $\Delta q_{d+m}=0$ and $\tb_{d+m-\ell}=b_{d+m-\ell}+\Delta b_{d+m-\ell}$ satisfies the following condition
\begin{equation*}
deg_y\tb_{d+m-\ell}\leq kn-1
\end{equation*}
\end{lemma}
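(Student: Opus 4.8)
The plan is to achieve the reduction using only the unit $u=1+\delta$, arranging the diffeomorphism so that the pair $(U,V)$ of Lemma \ref{lem2.7.1} vanishes. Since here $\phi=(x+x\alpha,y+y\beta)$ with $\alpha,\beta,\delta\in QP(m)$, the quantities $U,V$ of Lemma \ref{lem2.7.1} are $U=x(\alpha+\tfrac{k}{d+m}\delta)$, $V=y(\beta+\tfrac{\ell}{d+m}\delta)$, so I would simply take $\alpha=-\tfrac{k}{d+m}\delta$ and $\beta=-\tfrac{\ell}{d+m}\delta$. Then $U=V=0$, whence $\Delta q_{d+m}=AU+BV=0$ by \eqref{eqlem61} (so the Hamiltonian normalization of Lemma \ref{lem1.2.5} is left intact), the term $W$ in \eqref{eqlem62} also vanishes, and \eqref{eqlem62} collapses to
\[
\Delta b_{d+m-\ell}=\frac{1}{d+m}\bigl(m\,b_{d-\ell}\,\delta-q_d\,\partial_y\delta\bigr).
\]
The whole problem is thus reduced to finding $\delta\in QP(m)$ so that all monomials of $b_{d+m-\ell}+\Delta b_{d+m-\ell}$ of $y$-degree $\geq kn$ cancel.

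Next I would recast this as a statement about a square linear map. By Lemma \ref{lem2.4.1}(i) both $q_d$ and $b_{d-\ell}$ are divisible by $x$; writing $q_d=xq'$, $b_{d-\ell}=xb'$ and $L'\delta=m\,b'\delta-q'\partial_y\delta$, a linear operator $QP(m)\to QP(k\ell n+m)$, gives $\Delta b_{d+m-\ell}=\tfrac{x}{d+m}L'\delta$. Let $pr$ be the projection of $QP(k\ell n+m)$ onto the span of the monomials of $y$-degree $\geq kn$. The correspondence $x^iy^{kn+j'}\leftrightarrow x^iy^{j'}$ identifies this subspace with $QP(m)$, so by Lemma \ref{lem6} its dimension is exactly $e_m=\dim QP(m)$. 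Hence $pr\circ L'$ is an endomorphism of an $e_m$-dimensional space, and the lemma will follow once I show $pr\circ L'$ is surjective; by equality of dimensions this is the same as injectivity.

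The injectivity is the crux, and it is where the hypothesis $\lambda_i\notin\Q$ is used. I would work with the leading coefficient in $y$: if $\delta\neq0$ has $y$-degree $j_0$ with leading term $c_{j_0}x^{i_0}y^{j_0}$, then inserting the explicit expressions $q_d=c_0xy\prod_{i=1}^n(y^k-c_ix^\ell)$ and $b_{d-\ell}=-\tfrac{q_d}{y}\bigl(\sum_{i=1}^n\lambda_i\tfrac{ky^k}{y^k-c_ix^\ell}+v+k\lambda_\infty\bigr)$ from Lemma \ref{lem2.5.1}, a short computation shows that the coefficient of the top monomial $x^{i_0}y^{kn+j_0}$ of $L'\delta$ equals $-c_0c_{j_0}$ times
\[
m\Bigl(k\textstyle\sum_{i=1}^{n}\lambda_i+k\lambda_\infty+v\Bigr)+j_0 .
\]
Because $kn+j_0\geq kn$, this monomial lies in the projected range, so $pr(L'\delta)=0$ forces the displayed quantity to vanish. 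I would then invoke the index relation of Lemma \ref{lem2.5.1}, namely $\sum_{i=1}^{n}\lambda_i+\lambda_\infty+\tfrac{v}{k}=-\lambda_0-\tfrac{\ell-u}{\ell}-\tfrac{1}{k\ell}$, which turns the vanishing into $\lambda_0=\tfrac{j_0}{mk}-\tfrac{\ell-u}{\ell}-\tfrac{1}{k\ell}\in\Q$, contradicting $\lambda_0\notin\Q$.

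Consequently the leading coefficient of $L'\delta$ cannot vanish for $\delta\neq0$, so $pr\circ L'$ is injective, hence bijective, and the equation $pr(L'\delta)=-(d+m)\,pr(b_{d+m-\ell}/x)$ is solvable; the resulting $\delta$ (together with $\alpha,\beta$ as above) produces $deg_y\tilde b_{d+m-\ell}\leq kn-1$. I expect the only delicate bookkeeping to be the exact constant in the top coefficient and the verification that the high-$y$-degree subspace has dimension $e_m$; the conceptual obstacle is isolating the leading coefficient and recognizing that its non-vanishing is precisely the arithmetic condition $\lambda_0\notin\Q$.
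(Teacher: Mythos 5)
Your proof is correct, and its analytic core coincides with the paper's: everything is reduced to the operator $T\mapsto m\frac{b_{d-\ell}}{x}T-\frac{q_d}{x}\partial_y T$ from $QP(m)$ to $QP(k\ell n+m)$ followed by the projection onto the span of monomials of $y$-degree at least $kn$, with bijectivity obtained from the equality of dimensions ($e_m$ on both sides) plus an injectivity argument that turns on $\lambda_0\notin\Q$. Your route there, however, is genuinely leaner in two respects. First, instead of taking an arbitrary $(\alpha,\beta,\delta)$ with $\Delta q_{d+m}=0$, parametrizing $\Ker\Psi_m$ as $(\bU,\bV)=(Z\bB,-Z\bA)$ and then proving the identity $a_{d-k}B-b_{d-\ell}A=q_dC$ in order to write $\Delta b_{d+m-\ell}=x\,\Phi_m\left(\frac{\delta}{d+m}-ZC\right)$, you take the special kernel element $U=V=0$, which kills $W$ in \eqref{eqlem62} outright; this is legitimate because the lemma as stated only asserts existence. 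Be aware, though, that the paper's extra generality is not idle: the description of $\Delta b_{d+m-\ell}$ for \emph{every} transformation preserving $\Delta q_{d+m}=0$ is what the proof of Theorem A invokes when it appeals to ``the uniqueness in Lemma \ref{lem1.2.5} and \ref{lem1.2.6}'', and your restricted family $Z=0$ alone would not deliver that. Second, your injectivity argument is the paper's in different clothes: for a quasi-homogeneous polynomial the monomial of maximal $y$-degree is precisely the monomial of minimal $x$-degree, so isolating the coefficient of $x^{i_0}y^{kn+j_0}$ is exactly the paper's evaluation at $x=0$ after factoring $T=x^\theta\bT$ in \eqref{eq22}; your constant $m\bigl(k\sum_{i=1}^n\lambda_i+k\lambda_\infty+v\bigr)+j_0=m(v-k(1+\lambda_0))+j_0$ agrees with a direct computation (the printed \eqref{eq22} carries a harmless relative sign slip between its two terms), and either sign convention yields the same contradiction with $\lambda_0\notin\Q$. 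As a bonus, by proving injectivity of the projected map $pr\circ L'$ directly, you bypass the paper's separate injectivity proof of $\Phi_m$ via the decomposition $T=r\left(\frac{q_d}{x}\right)^\gamma$, which is logically redundant there anyway, since the argument establishing \eqref{1.2.10} already shows $T\neq 0$ implies $\Phi_m(T)\notin NQP_y(k\ell n+m)$.
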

\begin{proof}
Suppose that $\phi(x,y)=(x+x\alpha,y+y\beta)$ and $u(x,y)=1+\delta(x,y)$ where $\alpha,\beta$ and $\delta$ are quasi-homogeneous 
polynomials of degree $m$ such that $\Delta q_{d+m}=0$. By the proof of Lemma \ref{lem1.2.5}, we have $(\bU,\bV)\in \Ker\Psi_m$. It implies that $(\bU,\bV)=(Z\bB,-Z\bA)$ where $Z=0$ if $m\leq k\ell n$ and $Z\in QP(m-k\ell n)$ if $m>k\ell n$. Therefore, $W$ in Lemma \ref{lem2.7.1} can be written as follows:
\begin{eqnarray*}
W&=&\dy(a_{d-k}xZ\bB-b_{d-\ell}yZ\bA)-(\dy a_{d-k}-\dx b_{d-\ell})xZ\bB\\
&=& \dy(Z(a_{d-k}B-b_{d-\ell}A))-(\dy a_{d-k}-\dx b_{d-\ell})ZB.
\end{eqnarray*}
Denote by $C=\dy a_{d-k}-\dx b_{d-\ell}$. Then
\begin{align*}
a_{d-k}B-b_{d-\ell}A&=a_{d-k}(mb_{d-k}+\partial_y q_d)-b_{d-\ell}(ma_{d-k}+\partial_x q_d)\\
&=a_{d-k}\partial_y q_d-b_{d-\ell}\partial_x q_d\\
&=\frac{(q_d-\ell y b_{d-\ell})\partial_y q_d-kxb_{d-\ell}\partial_x q_d}{kx}\\
&=\frac{q_d\partial_y q_d-b_{d-\ell}(\ell y \partial_y q_d+kx\partial_x q_d)}{kx}\\
&=\frac{q_d\partial_y q_d-d b_{d-\ell}q_d}{kx}\\
&=\frac{q_d(\partial_y q_d-d b_{d-\ell})}{kx}\\
&=\frac{q_d(kx\partial_y a_{d-k}+ \ell y \partial_y b_{d-\ell}-(d-\ell)b_{d-\ell})}{kx}\\
&=\frac{q_d(kx\partial_y a_{d-k}- kx \partial_y b_{d-\ell})}{kx}\\
&=q_d(\partial_y a_{d-k}- \partial_y b_{d-\ell})\\&=q_dC.
\end{align*}
It follows
\begin{align*}
\Delta b_{d+m-l}&=\partial_y(Zq_dC)-CZB+\frac{mb_{d-\ell}\delta-q_d\partial_y\delta}{d+m}\\
&=(\partial_yq_d-B)ZC+q_d(\partial_y(ZC))+\frac{mb_{d-\ell}\delta-q_d\partial_y\delta}{d+m}\\
&=mb_{d-\ell}\left(\frac{\delta}{d+m}-ZC\right)-q_d\partial_y\left(\frac{\delta}{d+m}-ZC\right).
\end{align*}
Define the linear map
\begin{equation*}
 \Phi_m:QP(m)\rightarrow QP(k\ell n+m)
\end{equation*}
\begin{equation*}
T\mapsto m\frac{b_{d-\ell}}{x}T-\frac{q_d}{x}\dy T.
\end{equation*}
We will show that $\Phi_m$ is injective. Assume that there exists $T\in \Ker\Phi_m$ and $T\neq 0$. Decompose $T=r \left(\frac{q_d}{x}\right)^\gamma$ where $\gamma\in\N$ and $r$ does not divide $\frac{q_d}{x}$. It follows from $T\in\Ker\Phi_m$ that
\begin{equation}\label{2.2.8} r(mb_{d-\ell}-\gamma\dy q_d)=q_d\dy r.
\end{equation}
By Lemma \ref{lem2.5.1},
\begin{equation*}
mb_{d-\ell}-\gamma\dy q_d=-q_d\left(\sum_{i=1}^n(m\lambda_i+\gamma)\frac{ky^{k-1}}{y^k-c_ix^l}+\frac{m(k\lambda_\infty+v)+\gamma)}{y}\right).
\end{equation*}
If $\lambda_i\not\in\Q$ for all $i=1,\ldots,n,\infty$ then $\frac{1}{x}(mb_{d-\ell}-\gamma\dy q_d)$ and $\frac{q_d}{x}$ are coprime. It implies that $r\frac{(mb_{d-\ell}-\gamma\dy q_d)}{x}$ does not divide $\frac{q_d}{x}$ and this is contradictory to \eqref{2.2.8}.

Denote by $NQP_y(k\ell n+m)$ the subspace of $QP(k\ell n+m)$ generalized by the monomials  $g(x,y)$ such that
\begin{equation*}
deg_yg\leq kn-1.
\end{equation*}
Then for all $g\in NQP_y(k\ell n+m)$ we have 
\begin{equation}\label{2.2.9}
x^{[\frac{m}{k}[+1}|g(x,y).
\end{equation}
Denote by $NQP_y^\perp(k\ell n+m)$ the subspace of $QP(k\ell n+m)$ generalized by the monomials $g(x,y)$ such that
$$deg_yg\geq kn.$$
We also denote by $pr_{ym}$ the standard projection
\begin{equation*}
pr_{ym}:QP(k\ell n+m)\rightarrow NQP^\perp_y(k\ell n+m).
\end{equation*}
The proof is done if we can show that $pr_{ym}\circ\Phi_m$ is bijective from $QP(m)$ to $NQP_y^\perp(k\ell n+m)$.
Because  $\mathrm{dim}NQP_y^\perp(k\ell n+m)=e_m=\mathrm{dim}QP(m)$. It rests to show that $pr_{ym}\circ\Phi_m$ is injective. We claim that this is equivalent to prove that
\begin{equation}\label{1.2.10}
 \Ima\Phi_m\cap NPQ_y(k\ell n+m)=\{0\}.
\end{equation}
Indeed, if \eqref{1.2.10} is true, suppose that $g\in\Ker (pr_{ym}\circ\Phi_m)$. Then $\Phi_m(g)\in NPQ_y(k\ell n+m)$ which implies that $\Phi_m(g)=0$. It follows by the injectivity of $\Phi_m$ that $g=0$.

Now, let's prove \eqref{1.2.10}. Assume that there exists $T\neq 0$ such that $\Phi_m(T)\in NPQ_y(k\ell n+m)$. Decompose $T=x^\theta \bT$ where $x$ and $\bT$ are coprime. Because $T\in QP(m)$, 
we have $\theta\leq [\frac{m}{k}[$. By \eqref{2.2.9}, $x$ is a divisor of $m\frac{b_{d-\ell}}{x}\bT-\frac{q_d}{x}\dy\bT$. By Lemma \ref{lem2.5.1}, we have
\begin{equation}\label{eq22}
 m\frac{b_{d-\ell}}{x}(0,y)\bT(0,y)-\frac{q_d}{x}(0,y)\dy \bT(0,y)=
\end{equation}
\begin{equation*}
c_0y^{nk}\left( m(v-k(1+\lambda_0))\bT(0,y)-y\dy \bT(0,y)\right)
\end{equation*}
It is a contradiction because condition $\lambda_0\not\in\Q$ forces that the right hand side of \eqref{eq22} is different from $0$ for all non-zero polynomial $\bT(0,y)$.
\end{proof}
\begin{mythr}\label{theoremA}
For generic $\omega_d\in\QQ^d(f)$, each germ $\omega\in\QQ(w_d)$ is strictly formally orbitally equivalent to a unique form $\omega_{h,s}$
\begin{equation*}
\omega_{h,s}=\omega_d+d\left(xy h\right)+s(\ell ydx-kxdy), 
\end{equation*}
where 
\begin{equation*}
h(x,y)=\sum_{\substack{ki+\ell j\geq k\ell n+1\\ 0\leq i\leq \ell n-1\\ 0\leq j\leq kn-1}} h_{ij}x^iy^j,\; s(x,y)=\sum_{j=0}^{kn-1}s_j(x)x^{\ell n+1+]\frac{1-\ell j}{k}]}y^j,
\end{equation*} 
$s_i(x)$ are formal series on $x$.
\end{mythr}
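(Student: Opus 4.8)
The plan is to normalize $\omega$ in two successive stages — first the Hamiltonian part, then the radial part — and afterwards to settle uniqueness by a degree-by-degree argument. Throughout I use the \emph{unique} decomposition $\omega=\omega_d+dh+s\,\omega_R$ with $\omega_R=\ell y\,dx-kx\,dy$ furnished by Corollary \ref{cor1.1.9}, together with the identity $q=\omega(R)=q_d+R(h)$ (here $R=kx\dx+\ell y\dy$, and $\omega_R(R)=0$). Euler's relation $R(h_{d+m})=(d+m)h_{d+m}$ then gives $h_{d+m}=q_{d+m}/(d+m)$, so normalizing the quasi-homogeneous component $q_{d+m}$ is the same as normalizing $h_{d+m}$. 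Note the ``$h$'' of the statement differs from that of Corollary \ref{cor1.1.9} by the factor $xy$: the Corollary's $h$ starts in degree $d+1$, i.e. equals $xy$ times a series starting in degree $k\ell n+1$.

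First I would normalize the Hamiltonian part. Starting from $\omega\in\QQ(\omega_d)$ I induct on $m\geq 1$: at step $m$ I apply the diffeomorphism $\phi$ and the unit $u$ produced by Lemma \ref{lem1.2.5}, whose effect on lower degrees is trivial by Lemma \ref{lem2.7.1} ($\Delta q_{d+m'}=0$ for $m'<m$). This forces $\tq_{d+m}=xy\,\tq'_{d+m}$ with $\deg_x\tq'_{d+m}\leq\ell n-1$ and $\deg_y\tq'_{d+m}\leq kn-1$ when $1\le m\le k\ell n-1$, and $\tq_{d+m}=0$ when $m\geq k\ell n$. Since each fixed degree is affected by only finitely many steps, the composition of all these maps is a well-defined formal transformation, after which $h$ is exactly the polynomial $\sum h_{ij}x^iy^j$ with $0\le i\le\ell n-1$, $0\le j\le kn-1$, $ki+\ell j\geq k\ell n+1$. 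The generic condition $\det M_m\neq0$ is used precisely in the range $1\leq m\leq k\ell n-1$, while $\mathrm{gcd}(A,B)=1$ (Lemma \ref{lem2.7.2}) handles $m\geq k\ell n$.

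Next I would normalize the radial part without disturbing $h$. As $q$ (hence $h$) is now fixed, I use only transformations with $\Delta q_{d+m}=0$, i.e. those of Lemma \ref{lem1.2.6}; these fix $\omega_d$ and $q$, so by uniqueness of the decomposition they modify only $s$. Inducting on $m$, Lemma \ref{lem1.2.6} yields $\deg_y\tb_{d+m-\ell}\leq kn-1$ for every $m\geq1$. Because $b=b_{d-\ell}+\dy(xy\,h)-kx\,s$ and $\dy(xy\,h)$ already has $y$-degree $\leq kn-1$, this forces $\deg_y s\leq kn-1$. The $x$-exponent then comes from the divisibility \eqref{2.2.9}: the normalized $\tb_{d+m-\ell}$ lies, after division by $x$, in $NQP_y(k\ell n+m)$ and is thus divisible by a power of $x$; combining this with the quasi-homogeneity relation $kp+\ell j=k\ell n+m$ for a monomial $x^py^j$ of $s$ and with the correspondence $b\leftrightarrow -kx\,s$ pins the minimal exponent of $x$ in the $y^j$-column to $\ell n+1+]\tfrac{1-\ell j}{k}]$, each $s_j$ remaining a free formal series in $x$.

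Finally, for uniqueness I would suppose two normal forms $\omega_{h,s}$ and $\omega_{h',s'}$ are strictly formally orbitally equivalent and induct on the lowest degree $m$ at which they could differ. At that degree Lemma \ref{lem2.7.1} writes the $q$-discrepancy as $xy\,\Psi_m(\bU,\bV)$ and the $b$-discrepancy as $x\,\Phi_m(T)$. Since both $h,h'$ lie in $NQP$, the $q$-discrepancy vanishes (using $\Ima\Psi_m\cap NQP=\{0\}$, which follows from bijectivity of $pr_m\circ\Psi_m$ and injectivity of $\Psi_m$, for $m\le k\ell n-1$, and triviality for $m\ge k\ell n$), giving $(\bU,\bV)\in\Ker\Psi_m$; then, as both $s,s'$ correspond to $NQP_y$, the $b$-discrepancy lies in $\Ima\Phi_m\cap NQP_y=\{0\}$ by \eqref{1.2.10}, so it too vanishes. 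Hence $h_{d+m}=h'_{d+m}$ and $s_{k\ell n+m}=s'_{k\ell n+m}$, and induction yields $h=h'$, $s=s'$. I expect the main obstacle to be the radial stage: verifying cleanly that the maps of Lemma \ref{lem1.2.6} preserve the already-normalized $h$, and extracting the exact exponent $\ell n+1+]\tfrac{1-\ell j}{k}]$ from the divisibility \eqref{2.2.9}. The Hamiltonian stage and the uniqueness argument are comparatively mechanical once Lemmas \ref{lem1.2.5}, \ref{lem1.2.6}, \ref{lem2.7.1} and the non-degeneracy of the $M_m$ are in hand.
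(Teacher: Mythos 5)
Your proposal follows essentially the same route as the paper: the unique decomposition from Corollary \ref{cor1.1.9}, degree-by-degree normalization of $q_{d+m}$ and $b_{d+m-\ell}$ via Lemmas \ref{lem1.2.5} and \ref{lem1.2.6} with Lemma \ref{lem2.7.1} controlling lower degrees, the exponent $\ell n+1+]\frac{1-\ell j}{k}]$ extracted (as in the paper) from quasi-homogeneity of the components of $s$ together with $\deg_y s\leq kn-1$, and a uniqueness argument that merely makes explicit what the paper asserts via the bijectivity of $pr_m\circ\Psi_m$ and $pr_{ym}\circ\Phi_m$. The one caveat you yourself flag---that a degree-$m$ transformation from Lemma \ref{lem1.2.6} kills $\Delta q_{d+m'}$ only for $m'\leq m$ and may perturb higher components of $q$, so the two stages must in effect be interleaved into a single pass over $m$---is glossed over in exactly the same way by the paper's own exposition, so it does not distinguish your argument from theirs.
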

\begin{proof}
By Corollary \ref{cor1.1.9}, we can decompose
$$\omega=\omega_d+d(xyh)+s(\ell ydx-kxdy),$$
where $h_i=\frac{q_{i+k+\ell}}{(i+k+\ell)xy}=\frac{q'_i}{i+k+\ell}$, $q'=\frac{q}{xy}$. Let's rewrite $s(x,y)$ as follows
\begin{align*}
s_{i}&=\frac{\dy a_{i+\ell}+\dx b_{i+k}}{i+k+\ell}\\
&=\frac{1}{i+k+\ell}\left(\frac{\dy(q_{i+k+\ell}-\ell y b_{i+k})}{kx}-\dx b_{i+k}\right)\\
&=\frac{\dy q_{i+k+\ell}}{(i+k+\ell)kx}-\frac{\ell b_{i+k}+\ell y\dy b_{i+k}+kx\dx b_{i+k}}{(i+k+\ell)kx}\\
&=\frac{\dy q_{i+k+\ell}}{(i+k+\ell)kx}-\frac{b_{i+k}}{kx}.
\end{align*} 
By Lemma \ref{lem1.2.5} and \ref{lem1.2.6}, we  can remove all the monomials $x^iy^j$, $i\geq \ell n$, $j\geq k n$ in the components of $q'$ and all the monomial $x^iy^j$, $j\geq kn$ in the components of $b$. This implies we can normalize $h$ and $s$ such that
\begin{equation*}
h(x,y)=\sum_{\substack{ki+\ell j\geq k\ell n+1\\ 0\leq i\leq \ell n-1\\ 0\leq j\leq kn-1}} h_{ij}x^iy^j,\; s(x,y)=\sum_{j=0}^{kn-1}s'_j(x)y^j,
\end{equation*} 
where $s'_j(x)$ are formal series of $x$. Because $s(x,y)$ only contains the monomials of degree at least $k\ell n+1$, for each $j=0,\ldots,kn-1$ $s'_j(x)$ divides $x^{i_j}$ where $i_j$ is the minimal integer such that $ki_j+\ell j\geq k\ell n+1$ and. Moreover, $ki_j+\ell j\geq k\ell n+1$ if and only if $i_j\geq \ell n+1+\big]\frac{1-\ell j}{k}\big]$. So we have
$$s'_j(x)=s_j(x)x^{\ell n+1+]\frac{1-\ell j}{k}]},$$
where $s'_j(x)$ are formal series of $x$. The uniqueness part is straightforward by the uniqueness in Lemma \ref{lem1.2.5} and \ref{lem1.2.6}.
\end{proof}

\begin{remark} Since every formal diffeomorphism $\phi$ can decompose as $\phi=\phi'\circ\phi_0$ where $\phi_0$ is a linear transformation and $\phi'$ is strict, the formal normal form for the case in which we do not require the strict condition can be easily obtained. The slightly difference is in the initial part. For the strict case we have $n$ free coefficients corresponding to the coordinates of the non-corner singularities in the principle component of divisor, but for the general case we can normalize one of them and let the others free.  
\end{remark}

\begin{example}
For $n=2,k=3$, $\ell=2$, the strict formal normal form is given by
$$\omega_d+d(xyh)+s(\ell ydx-kxdy),$$
where
\begin{align*}
\omega_d=c_0xy(y^3-c_1x^2)(y^3-c_2x^2)\Bigg(&\lambda_1\frac{d(y^3-c_1x^2)}{y^3-c_1x^2}+\lambda_2\frac{d(y^3-c_2x^2)}{y^3-c_2x^2}\\
&+(2\lambda_0+1)\frac{dx}{x}+(3\lambda_\infty+1)\frac{dy}{y}\Bigg),
\end{align*}
$$h(x,y)=h_{3,2}x^3y^2+h_{1,5}xy^5+h_{2,5}x^2y^4+ h_{3,3}x^3y^3+h_{2,5}x^2y^5+h_{3,4}x^3y^4+ h_{3,5}x^3y^5,$$ 
$$s(x,y)=s_0(x)x^5+s_1(x)x^4y+s_2(x)x^3y^2+ s_3(x)x^3y^3+ s_4(x)x^2y^4+s_5(x)xy^5.$$
In the formula of $\omega_d$ we have two free coefficients $c_1$, $c_2$ corresponding to the position of two singularities of the cuspidal branches. However, the formal normal form (non-strict) is given by 
$$\bar{\omega}_d+d(xyh')+s'(\ell ydx-kxdy),$$
where $h'$ and $s'$ have the same form as $h$ and $s$ respectively, but we can normalize one singularity, which has the coordinates  $(x_c,y_c)=(1,0)$, and so $\bar{\omega}_d$ has only one free coefficient $c$:
\begin{equation*}
\bar{\omega}_d=xy(y^3-x^2)(y^3-cx^2)\left(\lambda_1\frac{d(y^3-x^2)}{y^3-x^2}+\lambda_2\frac{d(y^3-cx^2)}{y^3-cx^2}+ (2\lambda_0+1)\frac{dx}{x}+(3\lambda_\infty+1)\frac{dy}{y}\right).
\end{equation*}
\end{example}

\begin{remark}
The number of free coefficients of $h$ in the normal form is consistent with the dimension of Mattei's moduli space. Indeed, for $m=1,\ldots,k\ell n-1$ we have  
\begin{align*}
\#\{(i,j)\in\N^2|ki+\ell j=k\ell n+m, i\leq \ell n-1, j\leq k n-1\}&=e_{k\ell n+m}-2e_m\\
&=n-e_m
\end{align*}
So the number of free coefficients of $h$ is given by
$$\delta'(\omega)=\sum_{m=1}^{k\ell n-1}(n-e_m)=n(k\ell n-1)-\sum_{m=1}^{k\ell n-1}e_m.$$
By \cite{Gen-Pau1}, the dimension of Mattei's moduli space is given by
\begin{align*}
\delta(\omega)&=\sum_{m=0}^{k\ell n-1}\left(]\frac{\ell-u}{\ell}(m-k\ell n)]-[\frac{k-v}{k}(m-k\ell n[\right)\\
&=\sum_{m=0}^{k\ell n-1}\left(n+]-\frac{mu}{\ell}]-[-\frac{mv}{k}[\right).
\end{align*}
We will show that for any real number $a$, $[a[+]-a]=-1$. Indeed, since $[a[\leq a<[a[+1$, $]-a]<-a\leq ]-a]+1$, we have  
$$[a[+]-a]<0<[a[+]-a]+2.$$
Therefore, $[a[+]-a]=-1$. It follows that
\begin{equation*}
\delta(\omega)=\sum_{m=0}^{k\ell n-1}\left(n-]\frac{mu}{\ell}]+[\frac{mv}{k}[\right)=n^2k\ell -\sum_{m=0}^{k\ell n-1}e_m.
\end{equation*}
The difference of $\delta(\omega)$ and $\delta'(\omega)$ is given by 
$$\delta(\omega)-\delta'(\omega)=n-e_0=n-1.$$
The reason for the existence of this difference is from the fact that we just consider the strict conjugation. The number $n-1$ then is corresponding to the number of free coefficients corresponding to the position of non-corner singularities in the non-strict formal normal form.
\end{remark}



\end{document}